\newcommand{\setbuilder}[2]{\left\{#1\ \colon #2\right\}}
\DeclareMathOperator{\Span}{span}
\newcommand{\R}{{\mathbb R}}
\newcommand{\Sph}{{\mathbb S}}
\newcommand{\compl}[1]{\overline{#1}}
\newcommand{\epsi}{\varepsilon}
\theoremstyle{plain}
\newtheorem{problem}{Problem}
\newtheorem{theorem}{Theorem}
\newtheorem{corollary}[theorem]{Corollary}
\newtheorem{proposition}[theorem]{Proposition}
\newtheorem{lemma}[theorem]{Lemma}
\theoremstyle{definition}
\newtheorem{definition}{Definition}
\title{Embedding graphs in Euclidean space\footnote{An extended abstract of this paper was published in EUROCOMB'17, Vienna, 28~August--1~September 2017, Electronic Notes in Discrete Mathematics, \textbf{61} (2017), 475--481.}}
\author[1]{N\'ora Frankl\thanks{\texttt{n.frankl@lse.ac.uk} N. Frankl was partially supported by the National Research, Development, and Innovation Office, NKFIH Grant K119670.}}
\author[2]{Andrey Kupavskii\thanks{\texttt{kupavskii@yandex.ru} . A. Kupavskii was partially supported by the Swiss National Science Foundation grants no. 200020-162884 and 200021-175977 and  by the EPSRC grant no. EP/N019504/1.}}
\author[1]{Konrad J. Swanepoel\thanks{\texttt{k.swanepoel@lse.ac.uk}}}
\affil[1]{Department of Mathematics, London School of Economics and Political Science, London}
\affil[2]{Moscow Institute of Physics and Technology, University of Birmingham}
\date{}
\begin{document}
\maketitle

\begin{abstract}
The dimension of a graph $G$ is the smallest $d$ for which its vertices can be embedded in $d$-dimensional Euclidean space in the sense that the distances between endpoints of edges equal $1$ (but there may be other unit distances).
Answering a question of Erd\H{o}s and Simonovits [Ars Combin.\ \textbf{9} (1980) 229--246], we show that any graph with less than $\binom{d+2}{2}$ edges has dimension at most $d$.
Improving their result, we prove that that the dimension of a graph with maximum degree $d$ is at most $d$.
We show the following Ramsey result: if each edge of the complete graph on $2d$ vertices is coloured red or blue, then either the red graph or the blue graph can be embedded in Euclidean $d$-space.
We also derive analogous results for embeddings of graphs into the $(d-1)$-dimensional sphere of radius $1/\sqrt{2}$.
\end{abstract}

\textbf{Keywords:} Unit distance graph, graph representation, graph dimension

\section{Introduction}

\begin{definition} A graph $G=(V,E)$ is a \emph{unit distance graph} in Euclidean space $\R^d$, if $V\subset \R^d$ and \[E\subseteq \setbuilder{(x,y)}{x,y\in V, |x-y|=1}.\]
(Note that we do not require the edge set of a unit distance graph to contain all unit-distance pairs.)
We say that a graph $G$ is \emph{realizable} in a subset $X$ of $\R^d$, if there exists a unit distance graph $G'$ in $\R^d$ on a set of vertices $X_0\subset X$, which is isomorphic to $G$. We will use this notion for $X = \R^d$ and for $X = \Sph^{d-1}$, where $\Sph^{d-1}$ is the sphere of radius $1/\sqrt 2$ with center in the origin.
\end{definition}

Erd\H{o}s, Harary and Tutte \cite{EHT65} introduced the concept of Euclidean dimension $\dim G$ of a graph $G$.

\begin{definition}The \emph{Euclidean dimension} $\dim G$  (\emph{spherical dimension} $\dim_S G$) of a graph $G$ is equal to the smallest integer $k$ such that $G$ is realizable in $\R^k$ (on $\Sph^{k-1}\subset \R^k$).
\end{definition}

Erd\H{o}s and Simonovits \cite{ES80} showed that if $G$ has maximum degree $d$ then $\dim G\le \dim_S G\le d+2$.
In Theorem~\ref{max degree d} we improve this result.

\begin{theorem}\label{max degree d} Let $d\geq 1$ and let $G=(V,E)$ be a graph with maximum degree $d$. Then $G$ is a unit distance graph in $\R^d$ except if $d=3$ and $G$ contains $K_{3,3}$.
\end{theorem}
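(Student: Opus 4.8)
The plan is to prove the theorem by induction on $\lvert V(G)\rvert$, realizing $G$ by inserting its vertices one at a time. We use throughout that non-edges are unconstrained, so that ``$G$ is realizable in $\R^d$'' means precisely: there is an injective $f\colon V\to\R^d$ with $\lvert f(x)-f(y)\rvert=1$ for every edge $xy$. We may assume $G$ is connected, since distinct components can be placed far apart (the extra distances thus created are harmless). If $d=1$ then $G$ is a matching, realized on a line; if $d=2$ then $G$ is a disjoint union of paths and cycles, realized in the plane by zig-zagging the paths and drawing each $C_n$ as an equilateral $n$-gon. So assume $d\geq 3$. The ``only if'' direction of the exception is easy: if $d=3$ and $K_{3,3}\subseteq G$ then, since each of these six vertices already has degree $3$, the $K_{3,3}$ is an entire component of $G$, and $K_{3,3}$ is not realizable in $\R^3$ — each vertex on one side would have to lie at distance $1$ from all three vertices on the other side, but three affinely independent centres admit at most two such points (too few for three distinct vertices) and three collinear centres admit none (a point at distance $1$ from all three would project onto their line to a point equidistant from three distinct collinear points).

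\emph{The inductive step.} Delete a vertex $v$ of minimum degree and put $H=G-v$; then $H$ has maximum degree $\leq d$, and contains no $K_{3,3}$ when $d=3$, so by induction $H$ is realizable in $\R^d$. It remains to place $v$ at distance exactly $1$ from each of its neighbours $u_1,\dots,u_k$ and distinct from all other vertices. The admissible positions for $v$ form the intersection of the unit spheres about $u_1,\dots,u_k$, which — as long as $u_1,\dots,u_k$ are affinely independent with circumradius strictly below $1$ — is a sphere of dimension $d-k$. If $G$ is not $d$-regular then $k\leq d-1$, this sphere is infinite, and as only the finitely many already-placed vertices (and finitely many positions spoiling a general-position invariant) are forbidden, a valid position for $v$ exists. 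To make this rigorous one strengthens the induction: realize $H$ not by an arbitrary embedding but by sequential insertion of its vertices in a suitable DFS-type order, so that each vertex but one has a later neighbour and is therefore inserted with at most $d-1$ already-placed neighbours; one maintains the invariant that the placed points remain in sufficiently general position with their neighbourhoods clustered, which is exactly what keeps all relevant circumradii below $1$.

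\emph{The $d$-regular case and the main difficulty.} If $G$ is $d$-regular, insert $v$ last; then all $d$ of its neighbours are already placed and the admissible set for $v$ is a single pair of points $p,q$ (reflections of each other in the hyperplane spanned by the neighbours), so we are done unless both are occupied. In that case one perturbs: re-place the last-inserted neighbour of $v$ along its own positive-dimensional locus; this moves $p$ and $q$, while ``$p$ coincides with a given earlier vertex'' is a proper algebraic condition on that locus, so a slot opens up — unless the perturbation is obstructed. The technical heart, and the step I expect to be hardest, is exactly the bookkeeping behind the last two paragraphs: ensuring at every insertion that the neighbourhood one spans a sphere over is affinely independent with circumradius below $1$, and determining precisely when the perturbation above can fail. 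That last analysis is where the exception must surface: for $d=3$ the admissible set is at most two points, and an obstruction forces $v$'s neighbours into a configuration of the type that, as above, makes $K_{3,3}$ non-realizable in $\R^3$, which can happen only when $G$ itself contains a $K_{3,3}$.
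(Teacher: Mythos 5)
Your proposal has a genuine gap, and it sits exactly where you say you expect the hardest step to be. The whole argument hinges on the invariant that, at the moment a vertex is inserted, its already-placed neighbours are affinely independent with circumradius strictly below $1$; you assert this can be maintained by keeping the placed points ``in sufficiently general position with their neighbourhoods clustered'', but no mechanism is given, and none is available in a vertex-by-vertex greedy scheme. Two non-adjacent vertices that share a later neighbour carry no mutual distance constraint, while each of them is pinned to the intersection of unit spheres about its own earlier neighbours; nothing prevents these constraints from forcing them more than distance $2$ apart, after which the common later neighbour cannot be placed at all. Guaranteeing that every future neighbourhood stays inside a ball of radius $1$ while simultaneously respecting all unit-distance constraints is essentially the content of the theorem, not a bookkeeping detail. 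The $d$-regular case is in the same state: the claim that re-placing one neighbour along its ``positive-dimensional locus'' always frees one of the two candidate points, and that the only possible obstruction (for $d=3$) is a $K_{3,3}$ in $G$, is a hope rather than an argument --- the obstruction analysis is precisely where the exceptional graph has to be identified, and it is left undone.

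For comparison, the paper avoids the greedy difficulty by a structural device: Lov\'asz's partition lemma (Lemma~\ref{Lovasz}, refined in Lemma~\ref{more buses}) splits $V$ into parts inducing subgraphs of maximum degree $1$ or $2$, and these parts are realized on pairwise orthogonal circles (and, in the odd case, a $2$-sphere) of radius $1/\sqrt{2}$ centred at the origin, so that \emph{every} cross-part edge automatically has length $1$; the only vertices needing individual placement are a controlled independent set $W$ of degree-$2$ leftovers, each placed at one of the two points at distance $1$ from the subsphere through its $d$ neighbours. The remaining work --- the random embedding of the degree-$2$ part in general position on $\Sph^2$ (Proposition~\ref{three}, via Lov\'asz--Saks--Schrijver) and the analysis of ``conflicting'' triples and $4$-tuples to ensure at most two vertices compete for the same pair of points --- is exactly the rigorous replacement for the perturbation and clustering steps your sketch leaves open, and it is also where the $K_{3,3}$ exception for $d=3$ is actually derived rather than conjectured.
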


We also show the following simple result.

\begin{proposition}\label{max degree d-1} Let $d\geq 2$. Any graph $G=(V,E)$ with maximum degree $d-1$ has spherical dimension at most $d$.
\end{proposition}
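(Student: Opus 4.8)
The plan is to pass to an orthogonality reformulation and then run an induction on $|V|$ with a suitably strengthened inductive hypothesis. First I would record that a set of points on $\Sph^{d-1}$, the sphere of radius $1/\sqrt2$ about the origin, realizes an edge $xy$ at distance $1$ exactly when $\langle x,y\rangle=0$: if $|x|=|y|=1/\sqrt2$ then $|x-y|^2=1-2\langle x,y\rangle$. So realizing $G$ on $\Sph^{d-1}$ means choosing \emph{distinct} vectors $x_v\in\R^d$ of norm $1/\sqrt2$ with $x_u\perp x_v$ for every edge $uv$. Since two vectors of equal norm are parallel iff they are equal or antipodal, I would in fact prove the stronger statement, by induction on $|V|$, that such a realization exists in which \emph{no two} of the vectors $x_v$ are parallel. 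The point of the extra clause is that it guarantees, at each step, that the line in which a new vertex is forced to lie is not already blocked at both of its two sphere points.

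The induction step splits on the minimum degree. If $G$ has a vertex $v$ of degree at most $d-2$, realize $G-v$ by induction and set $L=\Span\{x_w:w\sim v\}$, a subspace of dimension at most $d-2$; then $L^\perp$ has dimension at least $2$, so $\Sph^{d-1}\cap L^\perp$ is infinite and I place $x_v$ on it outside the finitely many used points and the finitely many points parallel to them. A disconnected $G$ is dealt with separately: realize the components by induction and superimpose them after applying independent generic rotations, which preserves all orthogonalities and, being generic, creates no coincidences or antipodal pairs. This reduces the problem to the case that $G$ is connected and $(d-1)$-regular, which is the heart of the matter; here $K_d$ is realized explicitly by $\tfrac1{\sqrt2}e_1,\dots,\tfrac1{\sqrt2}e_d$, so I may assume $G\neq K_d$ and hence $|V|\ge d+1$.

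For a connected $(d-1)$-regular $G\neq K_d$ I would again delete a vertex $v$, with $N(v)=\{w_1,\dots,w_{d-1}\}$, and realize $G-v$ by induction. Now $L=\Span\{x_{w_1},\dots,x_{w_{d-1}}\}$ may be a full hyperplane, so $L^\perp=\R p$ is a line and $\Sph^{d-1}\cap L^\perp=\{p,-p\}$; by the strengthened hypothesis at most one of $\pm p$ is occupied, but simply putting $x_v=p$ while $-p$ is occupied would create a parallel pair. The manoeuvre I would use is that $w_1$ has only $d-2$ neighbours left in $G-v$, hence can be \emph{re-placed} anywhere on the positive-dimensional sphere $\Sigma_1=\Sph^{d-1}\cap\Span\{x_z:z\sim w_1\text{ in }G-v\}^\perp$ without damaging the realization of $G-v$; as the new position $x$ of $w_1$ ranges over $\Sigma_1$, the line $\Span\{x,x_{w_2},\dots,x_{w_{d-1}}\}^\perp$ moves, and since the vectors of $G-v$ other than $x_{w_1}$ contain no antipodal pair (inductive hypothesis), for a generic $x$ this line is unoccupied at one of its two sphere points, where $x_v$ can be placed; one then checks the new configuration still has no parallel pair.

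The main obstacle is precisely the degeneracy analysis in this last case: making sure the re-placement of $w_1$ actually changes the relevant line and that no parallel pair is introduced. The bad configurations are those in which re-placing $w_1$ is inert — essentially when $w_1$ is already orthogonal to the occupied antipode, for instance when $v$ and the vertex occupying $-p$ are non-adjacent \emph{twins} ($N(v)=N(b)$, $v\not\sim b$). I expect to handle these either by choosing $w_1\in N(v)$ non-adjacent to the occupied antipode whenever such a choice exists, or, in the genuine twin case, by contracting the two twins, realizing the resulting graph on $n-1$ vertices by induction, and then splitting the twins apart using the same freedom argument. Verifying that one of these always applies, and that the no-two-parallel invariant survives, is where the real work lies.
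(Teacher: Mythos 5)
Your orthogonality reformulation and the easy reductions (a vertex of degree at most $d-2$, disconnected graphs) are fine, but the argument has a genuine gap, and indeed your strengthened inductive statement is false as stated. Take $d=3$ and $G=C_4$, which has maximum degree $2=d-1$: in any realization $x_a,x_b,x_c,x_d\in\Sph^{2}$ of the $4$-cycle $abcd$, if $x_b$ and $x_d$ are not parallel then both $x_a$ and $x_c$ lie in the line $\Span\{x_b,x_d\}^{\perp}$, forcing $x_a=-x_c$; so every realization contains an antipodal (hence parallel) pair. (The paper points this out explicitly just before Proposition~\ref{three}.) Since your induction is on $|V|$ for a fixed $d$ and the ``no two $x_v$ parallel'' invariant is what you use to conclude that at most one of $\pm p$ is occupied, the scheme cannot run for all $d\geq 2$; at the very least $d=3$ must be handled by a different argument, and it is not clear the invariant holds for $d\geq 4$ either --- proving that is essentially the same difficulty you defer. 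Compounding this, the heart of the matter, the connected $(d-1)$-regular case in which the deleted vertex's neighbours span a hyperplane, is only sketched: that re-placing $w_1$ on $\Sigma_1$ genuinely moves the orthogonal line, that a generic position leaves one of its two sphere points free, that the twin-contraction manoeuvre can always be completed (splitting the twins again needs their common neighbourhood to span at most $d-2$ dimensions, which is not automatic), and that the no-parallel invariant survives, are all acknowledged as ``where the real work lies.'' As written this is a programme, not a proof.

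For comparison, the paper's proof sidesteps regular graphs and genericity entirely: it inducts on $d$ and invokes Lov\'asz's decomposition lemma (Lemma~\ref{Lovasz}) to partition $V$ into two classes inducing subgraphs of maximum degree $\lfloor (d-2)/2\rfloor$ and $\lceil (d-2)/2\rceil$, realizes these by induction on subspheres of radius $1/\sqrt{2}$ lying in orthogonal subspaces of $\R^d$, and uses the fact that any two points of norm $1/\sqrt{2}$ in orthogonal subspaces are automatically at distance $1$, so all cross edges come for free. If you want to rescue your route, you would need to weaken the invariant so as to tolerate the unavoidable antipodal pairs (compare the carefully stated exceptions for $4$-cycles in Proposition~\ref{three}) and then actually carry out the degeneracy analysis in the regular case; the Lov\'asz-partition argument avoids all of this.
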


\begin{definition} Let $f(d)$ denote the least number for which there is a graph with $f(d)$ edges that is not realizable in $\R^d$.
\end{definition}

There are some natural upper bounds on $f(d)$. Since $K_{d+2}$ is not realizable in $\R^d$, it is clear that $f(d)\le \binom{d+2}{2}$. For $d=3$ we have $f(3)\le9<\binom{3+2}{2}$, since $K_{3,3}$ can not be realized in $\R^3$.
In \cite{ES80}, Erd\H{o}s and Simonovits asked if $f(d)=\binom{d+2}{2}$ for $d>3$.
House \cite{Hou13} proved that $f(3)=9$, and that $K_{3,3}$ is the only graph with $9$ edges that can not be realized in $\R^3$. Chaffee and Noble \cite{CN16} showed that $f(4)=\binom{4+2}{2}=15$, and there are only two graphs, $K_6$ and $K_{3,3,1}$, with $15$ edges that can not be realized in $\R^4$ as a unit distance graph.
Recently, they showed \cite{CN17} that $f(5)=\binom{5+2}{2}=21$, and that $K_7$ is the only graph with $21$ edges that cannot be realized in $\R^5$ as a unit distance graph.
We confirm the above-mentioned question of Erd\H{o}s and Simonovits as part of the following result.

\begin{theorem}\label{d+2 choose 2} Let $d>3$. Any graph $G$ with less than $\binom{d+2}{2}$ edges can be realized in $\R^d$. If $G$ moreover does not contain $K_{d+2}-K_3$ or $K_{d+1}$, then it can be realized in $\Sph^{d-1}$.
\end{theorem}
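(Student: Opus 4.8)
The plan is to construct a realization of $G$ one vertex at a time, exploiting that the edge bound makes $G$ locally very sparse. First note that $G$ is $d$-degenerate: a subgraph $H$ with $|V(H)|\ge d+2$ and minimum degree $\ge d+1$ would have $|E(H)|\ge\tfrac12(d+1)|V(H)|\ge\binom{d+2}{2}$, contradicting $|E(H)|\le|E(G)|<\binom{d+2}{2}$, while subgraphs with at most $d+1$ vertices trivially have a vertex of degree $\le d$. Hence we may order $V(G)=\{v_1,\dots,v_n\}$ so that each $v_i$ has at most $d$ neighbours among $v_1,\dots,v_{i-1}$; writing $G_i$ for the induced subgraph on the first $i$ vertices, we realize $G_1\subseteq G_2\subseteq\cdots\subseteq G_n=G$ in succession. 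If $n\le d+1$ we realize $G$ directly: in $\R^d$ inside a unit regular $d$-simplex, and on $\Sph^{d-1}$, using that $G$ has no $K_{d+1}$, by sending a non-edge of $G$ to $\{e_1,-e_1\}/\sqrt2$ and the other vertices to distinct vectors among $e_2/\sqrt2,\dots,e_d/\sqrt2$ (exactly the realization of $K_{d+1}-e$). Otherwise we start from $G_{d+1}$, realized the same way, and carry out the inductive step.

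The inductive step adds $v_i$ to a given realization of $G_{i-1}$; say $v_i$ has $k\le d$ neighbours, realized at $q_1,\dots,q_k$. In $\R^d$ we must place $v_i$ at distance $1$ from every $q_j$ and distinct from all earlier points: when the $q_j$ are affinely independent the set of such points is a sphere of dimension $d-k$, nonempty exactly when the circumradius of $\{q_1,\dots,q_k\}$ is at most $1$. On $\Sph^{d-1}$, the condition $|x-q_j|=1$ is equivalent to $\langle x,q_j\rangle=0$, so the admissible set is $\Span(q_1,\dots,q_k)^{\perp}\cap\Sph^{d-1}$, a sphere of dimension $d-1-\dim\Span(q_1,\dots,q_k)$, nonempty as soon as $q_1,\dots,q_k$ are linearly dependent --- automatic if $k\le d-1$. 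Whenever the admissible set has positive dimension it contains infinitely many points and we simply dodge the finitely many forbidden coincidences, so those steps are routine; since extra unit distances among non-adjacent pairs are permitted, the only genuine constraint is that $v_i$ not land on an earlier vertex.

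The crux --- and the only place the full strength of the hypotheses is needed --- is the tight case, where the admissible set for $v_i$ is at most a pair of points: in $\R^d$ this means $v_i$ has exactly $d$ neighbours in $G_{i-1}$, and on $\Sph^{d-1}$ it means the images of those neighbours span a subspace of dimension $d-1$ or $d$. Here the admissible set may be empty (circumradius $>1$, respectively the images span all of $\R^d$), or may consist entirely of already-used vertices --- these are precisely the failures exhibited by $K_{3,3}$ in $\R^3$, and by $K_{d+1}$ and $K_{d+2}-K_3$ on $\Sph^{d-1}$. To get around this one must use the freedom in the choice of realization of $G_{i-1}$: once $G_{i-1}$ has enough vertices, its $d$-degeneracy and the edge bound make its realizations non-rigid, forming a family of positive dimension, and one argues that the realizations from which $v_i$ cannot be added lie in a proper subvariety --- so that a usable realization of $G_{i-1}$ still exists --- unless $G$ contains one of the excluded subgraphs; the remaining cases, where $G_{i-1}$ has few vertices, are pinned down by hand. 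Concretely, the degenerating conditions (``the images of $N(v_i)$ fail to span'', then ``the resulting point is new'') are imposed one at a time, and the sole local obstruction to doing so is a copy of $K_{d+1}$, of $K_{d+2}-K_3$, or, when $d\le 3$, of $K_{3,3}$ --- which is exactly why the hypothesis $d>3$ appears. I expect the main work to be this last part: threading through the induction a genericity hypothesis robust enough both to start the construction and to survive each tight step, together with the combinatorial identification of the two excluded graphs as the precise bottlenecks.
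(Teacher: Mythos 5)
Your proposal is a plan rather than a proof: the entire difficulty of the theorem is concentrated in the step you defer (``the tight case''), and the argument you sketch for it does not work as stated. Two concrete problems. First, for realization in $\R^d$ the theorem has \emph{no} excluded subgraphs, so it is not enough to say the local obstructions are $K_{d+1}$, $K_{d+2}-K_3$ (and $K_{3,3}$ for $d\le 3$): you must actually realize $G$ in $\R^d$ even when it contains $K_{d+1}$ or $K_{d+2}-K_3$, and your scheme gives no mechanism for this. Indeed this is exactly where genericity fails: if $G_{i-1}$ contains a $K_{d+1}$, its image is a regular simplex, rigid up to isometry, so a later vertex $v_i$ joined to $d$ vertices of that clique has an admissible set of exactly two points \emph{in every} realization; whether these two points are already occupied is a combinatorial question about how many such vertices share the same neighbourhood, not a question of choosing a generic realization. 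Second, the claim that the ``bad'' realizations of $G_{i-1}$ form a proper subvariety of a positive-dimensional realization space is asserted, not argued; the realization space can have rigid or low-dimensional components, and you give no argument that the edge bound $<\binom{d+2}{2}$ forces flexibility precisely where you need it, nor any counting that limits how many tight vertices can conflict. That counting is unavoidable: the theorem is sharp ($K_{d+2}$ and, for the sphere, $K_{d+1}$ and $K_{d+2}-K_3$ just fail), so any proof must use the exact edge count, and your proposal never does beyond establishing $d$-degeneracy, which by itself only handles vertices of back-degree at most $d-1$ (at most $d-2$ on the sphere) --- the easy part.

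For comparison, the paper argues quite differently: it inducts on $d$ (not on vertices), first stripping vertices of degree at most $d-2$ (these are the routine additions, via the great-circle lemma), and then analyzes the remaining core $H$ of minimum degree at least $d-1$. Edge counting shows that if $H$ contains $K_{d+1}$ or $K_{d+2}-K_3$ then $H$ is essentially that clique-like graph and can be embedded directly in $\R^d$ (two simplices over a common facet, or a $K_d$ on the sphere plus one off-sphere apex); otherwise $H$ is small enough to sit inside the cross-polytope on $\Sph^{d-1}$, or one deletes a maximum-degree vertex and a non-neighbour, applies the induction hypothesis in dimension $d-1$, and places the two deleted vertices at the poles, with two further counting cases ruling out a $K_d$ or $K_{d+1}-K_3$ in the remainder. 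If you want to salvage your vertex-by-vertex approach, you would have to supply exactly this kind of structural and counting analysis for the tight steps; as written, the crux is missing.
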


Ramsey-type questions about unit distance graphs have been studied by Kupavskii, Raigorodskii and Titova \cite{KRT13} and by Alon and Kupavskii \cite{AK14}. In \cite{AK14} the first of the following quantities were introduced.
\begin{definition}
Let $f_D(s)$ denote the smallest possible $d$, such that for any graph $G$ on $s$ vertices, either $G$ or its complement $\compl{G}$ can be realized as a unit distance
graph in $\R^d$.
Similarly, we define $f_{SD}(s)$ to be the smallest possible $d$, such that for any graph $G$ on $s$ vertices, either $G$ or its complement $\compl{G}$ can be realized as a unit distance
graph in $\Sph^{d-1}$.
\end{definition}
In \cite{AK14} it is shown that $f_D(s)=(\frac{1}{2}+o(1))s$.
We determine the exact value of $f_{SD}(s)$ and give almost sharp bounds on $f_D(s)$.

\begin{theorem}\label{Ramsey}
For any $d,s\geq 1$, $f_{SD}(s)=\lceil (s+1)/2 \rceil$ and $\lceil (s-1)/2 \rceil\le f_D(s)\le \lceil s/2 \rceil$.
\end{theorem}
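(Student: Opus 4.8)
The plan is to handle the spherical and Euclidean statements in parallel. The starting point is that two points on $\Sph^{d-1}$ lie at distance $1$ precisely when they are orthogonal as vectors, so realizing $G$ on $\Sph^{d-1}$ amounts to choosing pairwise distinct nonzero $x_v\in\R^d$ with $x_u\perp x_v$ for every edge $uv$. This yields three cheap moves. (i) $\dim_S(G_1\sqcup G_2)=\max(\dim_S G_1,\dim_S G_2)$, since disjoint graphs can share all coordinates after a generic rotation. (ii) $\dim_S G\le\dim_S(G-I)+2$ for an independent set $I$: put $G-I$ on the sphere in $\R^{d'}$ and place the vertices of $I$ on a circle spanned by two fresh coordinates, which is automatically orthogonal to everything already placed. (iii) $\dim_S G\le\dim_S(G-\{u,v\})+1$ for a non-adjacent pair $\{u,v\}$: one fresh coordinate, two antipodes. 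Since $\dim_S K_n=n$ and $\dim_S\compl{K_n}=2$ for $n\ge 3$, applying (iii) along a maximum matching of $\compl G$ and then realizing the residual clique of $G$ gives $\dim_S G\le s-\nu(\compl G)$, while applying (ii) to a maximum independent set gives $\dim_S G\le s-\alpha(G)+2$. In Euclidean space a clique on $r$ vertices is a regular $(r-1)$-simplex and every $n$-vertex graph embeds in $\R^{n-1}$, so the same peeling gives the one-smaller bounds $\dim G\le s-\nu(\compl G)-1$ and $\dim G\le s-\alpha(G)+1$ (the insertion steps need a routine general-position argument, or one may fall back on Theorems~\ref{max degree d} and~\ref{d+2 choose 2} in degenerate cases).

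For the lower bounds, fix $a$ with $\lfloor s/2\rfloor+1\le a\le\lceil s/2\rceil+1$ and take $G=K_a\sqcup\compl{K_{s-a}}$. The clique gives $\dim_S G\ge a$ and $\dim G\ge a-1$. The complement is the join $\compl{K_a}\vee K_{s-a}$: there the $s-a$ vertices of $K_{s-a}$ occupy an orthonormal frame (resp.\ the vertices of a regular $(s-a-1)$-simplex) spanning a fixed subspace, and the $a$ mutually non-adjacent vertices, being each at distance $1$ from all of them, must lie on a circle (resp.\ a positive-dimensional sphere) inside the orthogonal complement of that subspace; hence $\dim_S\compl G\ge s-a+2$ and $\dim\compl G\ge s-a+1$ (using $a\ge 3$). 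The chosen range of $a$ makes all of $a$, $s-a+2$, $a-1$, $s-a+1$ exceed the relevant thresholds, giving $f_{SD}(s)\ge\lceil(s+1)/2\rceil$ and $f_D(s)\ge\lceil(s-1)/2\rceil$; the few small values of $s$ where this range degenerates are checked directly.

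For the upper bounds it suffices, by the peeling estimates, to show that for every $G$ on $s$ vertices one of $s-\nu(\compl G)$, $s-\nu(G)$, $s-\alpha(G)+2$, $s-\omega(G)+2$ is at most $\lceil(s+1)/2\rceil$ (the Euclidean bound then follows since the corresponding quantity is one smaller, except when $s$ is odd, where $\lceil s/2\rceil=\lceil(s+1)/2\rceil$). Two cases are immediate: if $\alpha(G)\ge\lfloor s/2\rfloor+2$ or $\omega(G)\ge\lfloor s/2\rfloor+2$ use the $\alpha$- or $\omega$-bound; and if $G$ or $\compl G$ is disconnected with all components of order at most $\lceil(s+1)/2\rceil$, use (i). The remaining case reduces to the Nordhaus--Gaddum-type claim that, when $\alpha(G),\omega(G)\le\lfloor s/2\rfloor+1$, one has $\max\{\nu(G),\nu(\compl G)\}\ge\lceil s/2\rceil-1$. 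To prove it, suppose both matching numbers are strictly smaller; by the Berge--Tutte formula pick Tutte sets $B,B'$ with at least $|B|+3$ odd components in $G-B$ and at least $|B'|+3$ in $\compl G-B'$. Since any two components of $G-B$ are completely joined in $\compl G$ (and symmetrically), one of $G,\compl G$ contains a spanning complete multipartite subgraph on many parts, which has a near-perfect matching unless one part is very large — and a very large part contradicts the assumed bounds on $\alpha$ and $\omega$. I expect this matching Nordhaus--Gaddum claim, together with the clean handling of the exceptional small $s$ and of the $K_{3,3}$/dimension-$3$ anomaly in Theorem~\ref{max degree d}, to be the main obstacle; the general-position bookkeeping in the Euclidean insertion steps is secondary but also needs care.
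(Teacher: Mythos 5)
Your lower-bound constructions and the spherical peeling bounds (orthogonality representation, the cross-polytope/matching bound $\dim_S G\le s-\nu(\compl{G})$, the independent-set bound $\dim_S G\le s-\alpha(G)+2$) are fine and close in spirit to the paper's Lemma~\ref{cross-polytope} and its lower-bound examples. The problem is the upper bound: the Nordhaus--Gaddum-type matching claim you reduce it to is false, and with it the whole reduction collapses. Take $s=11$ and $G=K_3\vee\bigl(K_3\sqcup\compl{K_5}\bigr)$ (an apex triangle joined to everything, plus a triangle and five isolated vertices). Then $\alpha(G)=\omega(G)=6=\lfloor s/2\rfloor+1$, so your first case does not apply, yet by Berge--Tutte (delete the apex triangle: six odd components) $\nu(G)=4$, and $\compl{G}=3K_1\sqcup(K_5\vee\compl{K_3})$ has $\nu(\compl{G})=4$, so $\max\{\nu(G),\nu(\compl G)\}=4<\lceil s/2\rceil-1=5$. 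Worse, none of your four quantities certifies this graph: all of $s-\nu(\compl G)$, $s-\nu(G)$, $s-\alpha(G)+2$, $s-\omega(G)+2$ equal $7>6=\lceil(s+1)/2\rceil$, and the disconnectedness case fails too ($G$ is connected, $\compl G$ has a component on $8>6$ vertices). In fact $\dim_S\compl G\ge 7$, so here one \emph{must} embed $G$ itself, which works only because the six pairwise non-adjacent components of $G$ minus the apex can reuse the same three coordinates --- an effect invisible to any of your global parameters $\nu,\alpha,\omega$ (the same phenomenon scales up, e.g.\ $K_3\vee 6K_3$ on $21$ vertices). This is exactly where the paper's proof goes a different way: it is degree-based --- if one colour class has maximum degree at most $d-1$, Proposition~\ref{max degree d-1} finishes; otherwise one deletes a vertex of red degree $\ge d$ and a vertex of blue degree $\ge d$ and inducts, re-inserting them at poles of the smaller sphere.

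A secondary but real issue is the Euclidean ``$-1$'' peeling. The bound $\dim G\le s-\nu(\compl G)-1$ is already false in the degenerate case you wave at ($G=C_4$: the formula gives $1$, but $\dim C_4=2$), and the insertion step off the sphere is not routine: the locus of points at distance $1$ from the already-placed neighbours of a vertex is empty whenever their circumradius exceeds $1$, so ``one fresh coordinate, two antipodes'' has no Euclidean analogue in general. The paper avoids this by keeping every vertex except the very last batch on the sphere of radius $1/\sqrt2$ (so neighbour sets always have circumradius at most $1/\sqrt2$ and the available locus contains a circle), and it proves $f_D(2d)\le d$ by a separate argument: at most $d$ vertices have red degree $\ge d$, these are placed on $\Sph^{d-1}$, and the remaining low-degree vertices are added one by one, on or off the sphere. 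So both the combinatorial core of your upper bound and the Euclidean refinement need to be replaced, not just tightened.
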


\section{Maximum degree}

We use the following lemma of Lov\'asz in the proofs of the results on bounded maximum degrees.

\begin{lemma}[\cite{Lov66}]\label{Lovasz}
Let $G=(V,E)$ be a graph with maximum degree $k$ and let $k_1,\dots, k_{\alpha}$ be non-negative integers such that $k_1+\dots+k_{\alpha}=k-\alpha+1$. Then there is a partition $V=V_1\cup\dots\cup V_{\alpha}$ of the vertex set into $\alpha$ parts such that the maximum degree in $G[V_i]$ is at most $k_i$, $i=1,\dots, \alpha$.
\end{lemma}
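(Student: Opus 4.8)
The plan is a one-step extremal (potential-function) argument over the finite family of all labelled partitions of $V$ into $\alpha$ parts. For a subset $W\subseteq V$ and a vertex $v$, write $d(v,W)$ for the number of neighbours of $v$ lying in $W$, so the statement to be proved is that the partition $\mathcal P=(V_1,\dots,V_\alpha)$ selected below satisfies $d(v,V_i)\le k_i$ for every $i$ and every $v\in V_i$. For a labelled partition $\mathcal P=(V_1,\dots,V_\alpha)$ I would set
\[
\Phi(\mathcal P)=\sum_{i=1}^{\alpha}\frac{e(G[V_i])}{k_i+1},
\]
where $e(H)$ denotes the number of edges of $H$; each denominator $k_i+1$ is a positive integer, so $\Phi$ is well defined, and since there are only finitely many labelled partitions the minimum of $\Phi$ is attained, say at $\mathcal P$.

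I would then suppose for contradiction that some part $V_i$ contains a vertex $v$ with $d(v,V_i)\ge k_i+1$. Relocating $v$ from $V_i$ to a part $V_j$ with $j\ne i$ decreases $e(G[V_i])$ by $d(v,V_i)$ and increases $e(G[V_j])$ by $d(v,V_j)$, changing the potential by $\tfrac{d(v,V_j)}{k_j+1}-\tfrac{d(v,V_i)}{k_i+1}$; the crux is to choose $j$ making this quantity negative. Using the hypothesis $\sum_{t=1}^{\alpha}(k_t+1)=k+1$ together with $d(v,V)\le k$,
\[
\sum_{j\ne i}d(v,V_j)=d(v,V)-d(v,V_i)\le k-(k_i+1)=\Bigl(\sum_{j\ne i}(k_j+1)\Bigr)-1,
\]
so by pigeonhole there is some $j\ne i$ with $d(v,V_j)\le k_j$. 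For this $j$,
\[
\frac{d(v,V_j)}{k_j+1}-\frac{d(v,V_i)}{k_i+1}\le\frac{k_j}{k_j+1}-\frac{k_i+1}{k_i+1}=-\frac{1}{k_j+1}<0,
\]
contradicting the minimality of $\Phi(\mathcal P)$. Hence no such $v$ exists, and $\mathcal P$ is the desired partition.

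I do not expect a serious obstacle. The only two points needing care are that $\Phi$ attains its minimum — immediate, since there are finitely many labelled partitions and no denominator vanishes (the parts are allowed to be empty, which is harmless) — and the pigeonhole step, which is exactly where the arithmetic hypothesis $k_1+\dots+k_\alpha=k-\alpha+1$ is used, forcing $\sum_{j\ne i}d(v,V_j)$ strictly below $\sum_{j\ne i}(k_j+1)$. A cosmetic alternative would be to replace $\Phi$ by an integer-valued weighted edge count $\sum_i c_i\,e(G[V_i])$ for suitable integer weights $c_i$ and argue that repeatedly relocating an overloaded vertex strictly decreases it and therefore terminates; the rational weights $1/(k_i+1)$ above simply make the single-step inequality the cleanest to state.
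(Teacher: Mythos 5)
Your argument is correct, and there is no step that fails: the potential $\Phi(\mathcal P)=\sum_i e(G[V_i])/(k_i+1)$ is minimized over the finite set of labelled partitions, the identity $\sum_t(k_t+1)=k+1$ turns the bound $d(v,V)\le k$ into $\sum_{j\ne i}d(v,V_j)\le\sum_{j\ne i}(k_j+1)-1$, the pigeonhole step then produces a part $V_j$ with $d(v,V_j)\le k_j$, and the relocation strictly decreases $\Phi$, giving the contradiction (the degenerate case $\alpha=1$ is vacuous, since then $k_1=k$ and no vertex can violate the bound). Note, however, that the paper itself does not prove this statement: Lemma~\ref{Lovasz} is quoted from Lov\'asz's 1966 paper, so there is no in-paper proof to compare against. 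Your weighted potential is in fact the classical argument for the general, unequal $k_i$; the closest thing in the paper is the proof of Lemma~\ref{more buses}, which minimizes the unweighted sum $\sum_i e(G[V_i])$ --- that suffices there because all parts carry the same degree bound (and extra structural information about minimal partitions is extracted), whereas for arbitrary $k_1,\dots,k_\alpha$ the weights $1/(k_i+1)$ (or, as you remark, any proportional integer weights) are exactly what makes the single-move decrease go through.
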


The proof of Proposition~\ref{max degree d-1} is a simple induction.

\begin{proof}[Proof of Proposition~\ref{max degree d-1}]
The proof is by induction on $d$. For $d=2$ and $d=3$ the theorem is easy to verify. Let $V=V_1\cup V_2$ be a partition as in Lemma~\ref{Lovasz} for $\alpha=2$, $k_1=\lfloor\frac{d-2}{2}\rfloor$, and $k_2=\lceil\frac{d-2}{2}\rceil$. Then by the induction hypothesis, $G[V_i]$ can be represented on a $\Sph^{k_i}$ in $\R^{k_i+1}$. Represent $G[V_1]$ and $G[V_2]$ on $\Sph^{k_1}$ and $\Sph^{k_2}$ in orthogonal subspaces of dimension $k_1+1$ and $k_2+1$, respectively.
Since the distance between any point in $\Sph^{k_1}$ and any point in $\Sph^{k_2}$ is $1$, and both spheres are subspheres of $\Sph^{d-1}$, we obtain a representation of $G$ in $\Sph^{d-1}$.
\end{proof}

In the proof of Theorem~\ref{max degree d} we use
Lemma~\ref{more buses}, which is a strengthening of a special case of Lemma~\ref{Lovasz}, and
Proposition~\ref{three}, which gives an embedding of cycles in sufficiently general position on the $2$-sphere.

\begin{lemma}\label{more buses} Let $G=(V,E)$ be a graph with maximum degree $d$.

If $d$ is even, then there is a partition $V=V_1\cup\dots\cup V_{d/2}$ such that the maximum degree of $G[V_i]$ is  at most $1$ for $1\leq i <d/2$, the maximum degree of $G[V_{d/2}]$ is at most $2$, and any $v\in V_{d/2}$ of degree $2$ in $G[V_{d/2}]$ has exactly $2$ neighbours in each $V_i$.

If $d$ is odd, then there is a partition $V=V_1\cup\dots\cup V_{(d-1)/2}$ such that the maximum degree of $G[V_i]$ is  at most $1$ for $1\leq i <(d-3)/2$, the maximum degree of $G[V_{(d-3)/2}]$ and $G[V_{(d-1)/2}]$ is at most $2$, any degree $2$ vertex in $G[V_{(d-3)/2}]$ has exactly $2$ neighbours in each $V_i$ for $i\leq(d-5)/2$ and exactly $3$ neighbours in $V_{(d-1)/2}$, and any degree $2$ vertex of $G[V_{(d-1)/2}]$ has at least $2$ neighbours in each $V_i$ for $i\leq (d-3)/2$ and at most $3$ neighbours in $V_{(d-3)/2}$.
\end{lemma}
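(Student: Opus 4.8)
The plan is to adapt the weighting argument behind Lemma~\ref{Lovasz}: fix real weights $c_1,\dots,c_\alpha>0$, one per part, and among all ordered partitions $V=V_1\cup\dots\cup V_\alpha$ into $\alpha$ possibly empty parts pick one minimising the potential $\Phi=\sum_i c_i\,e(G[V_i])$. For a vertex $v$ and an index $j$ write $d_j(v):=|N(v)\cap V_j|$, so that $\sum_j d_j(v)=\deg_G(v)\le d$; moving $v$ from its part $V_p$ to another part $V_q$ changes $\Phi$ by $c_q d_q(v)-c_p d_p(v)$, which is $\ge0$ for every such move by minimality. The weights will be chosen so that these inequalities, together with $\deg_G(v)\le d$, pin the minimiser down to exactly the extremal configuration claimed.

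For even $d$ take $\alpha=d/2$, $c_i=\tfrac12$ for $i<\alpha$, and $c_\alpha=\tfrac13$. If some $v\in V_i$ with $i<\alpha$ had $d_i(v)\ge2$, then the moves of $v$ to the other parts would force $d_{i'}(v)\ge d_i(v)\ge2$ for every other $i'<\alpha$ and $d_\alpha(v)\ge\tfrac32 d_i(v)\ge3$, giving $\deg_G(v)\ge2+2(\alpha-2)+3=d+1$, a contradiction; an identical count applied to a vertex of $V_\alpha$ shows $G[V_\alpha]$ has maximum degree $\le2$. If now $v\in V_\alpha$ has $d_\alpha(v)=2$, the move of $v$ to any $V_{i'}$ with $i'<\alpha$ forces $\tfrac12 d_{i'}(v)\ge\tfrac13\cdot2$, i.e.\ $d_{i'}(v)\ge2$; since $d_\alpha(v)=2$ and $2(\alpha-1)+2=d$, we get $\deg_G(v)=d$ and $d_{i'}(v)=2$ for all $i'<\alpha$, which is the asserted property.

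For odd $d$ take $\alpha=(d-1)/2$, keep $c_i=\tfrac12$ for $i<\alpha-1$, and put \emph{unequal} weights $c_{\alpha-1}=a$ and $c_\alpha=b$ on the last two parts, with $\tfrac14<b<a<\tfrac12$ and $a/b<\tfrac32$ (for instance $a=\tfrac25$, $b=\tfrac3{10}$). The same move analysis, with a short degree count as above, gives maximum degree $\le1$ in $V_1,\dots,V_{\alpha-2}$ and $\le2$ in $V_{\alpha-1}$ and $V_\alpha$. For a degree-$2$ vertex $v$ of $G[V_{\alpha-1}]$, the moves of $v$ into the small parts give $d_{i'}(v)\ge4a>1$, hence $d_{i'}(v)\ge2$, and the move into $V_\alpha$ gives $d_\alpha(v)\ge\tfrac{2a}{b}>2$, hence $d_\alpha(v)\ge3$; as $2(\alpha-2)+2+3=d$ these are all tight, so $v$ has exactly $2$ neighbours in each $V_i$ with $i\le\alpha-2$ and exactly $3$ in $V_\alpha$. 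For a degree-$2$ vertex $v$ of $G[V_\alpha]$, the moves into the small parts give $d_{i'}(v)\ge4b>1$, hence $\ge2$, and the move into $V_{\alpha-1}$ gives $d_{\alpha-1}(v)\ge\tfrac{2b}{a}>1$, hence $\ge2$; thus $v$ has at least $2$ neighbours in each of $V_1,\dots,V_{\alpha-1}$, and since the small parts contribute $\ge2(\alpha-2)$ and $V_\alpha$ contributes $2$, the bound $\deg_G(v)\le d$ yields $d_{\alpha-1}(v)\le3$. Note that $a>b$, which makes $\tfrac{2a}{b}>2$ while $\tfrac{2b}{a}<2$, is exactly what creates the asymmetry between the two last parts in the conclusion.

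The genuine obstacle is the choice of weights. The values $\tfrac12$ and $\tfrac13$ in the even case are essentially forced; in the odd case the two ``maximum degree $\le2$'' parts must carry \emph{different} weights to produce the asymmetric statement, yet both must lie in a narrow window around $\tfrac13$ so that the rounded-up thresholds produced by the move inequalities ($4a$, $4b$, $6a$, $6b$, $\tfrac{2a}{b}$, $\tfrac{2b}{a}$, $a/b$, $b/a$) all land on the right side of the relevant integers at once; exhibiting one admissible pair $(a,b)$ that does everything simultaneously is the crux. A minor separate point: for small $d$ (such as $d=3$ or $d=5$) some index ranges in the statement are empty, and there the claims collapse to the maximum-degree bounds and are immediate.
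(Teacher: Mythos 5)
Your argument is correct, and it verifies: in the even case the move inequalities $c_qd_q(v)\ge c_pd_p(v)$ with weights $(\tfrac12,\dots,\tfrac12,\tfrac13)$ do force exactly the stated structure, and in the odd case every threshold you need ($4a>1$, $4b>1$, $6a,6b>1$, $1/a,1/b>2$, $2a/b>2$, $3b/a>2$, $2b/a>1$) holds simultaneously for, e.g., $(a,b)=(\tfrac25,\tfrac3{10})$, so the minimiser of the weighted potential already satisfies all four conclusions with no further adjustment. This is the same general philosophy as the paper (take a partition minimising the number of edges inside parts and exploit single-vertex moves, as in Lov\'asz's lemma), but your execution is genuinely different: the paper minimises the \emph{unweighted} sum $\sum_i e(G[V_i])$, deduces that every vertex has at most two neighbours in its own part and that such degree-$2$ vertices have a prescribed neighbour distribution, and then \emph{relocates} those degree-$2$ vertices one by one into the last part (in the odd case, into one of the last two parts, followed by a further re-sorting between them), implicitly using that such relocations do not increase the sum and hence preserve minimality. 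Your asymmetric weights absorb that entire post-processing stage into the choice of potential, which makes the odd case noticeably cleaner, at the cost of having to exhibit admissible weights --- which you do. One small caveat: your closing remark about small $d$ is slightly off for $d=3$, where the odd-case statement is degenerate (with one part it would assert maximum degree at most $2$, which fails for, say, $K_4$); this is a defect of the lemma's statement rather than of either proof, since the paper only invokes the odd case for $d\ge 5$, and for $d=5$ your general argument applies verbatim (the ranges of ``small'' parts are simply empty), so no separate treatment is needed there.
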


\begin{proof}
$d$ is even: Let $V=V_1\cup\dots\cup V_{d/2}$ be a partition for which $\sum_{i=1}^{d/2}{e(G[V_i])}$ is minimal, where $e(G[V_i])$ denotes the number of edges in $G[V_i]$. For such a partition, each $v\in V_i$ is joined to at most 2 vertices in $V_i$, otherwise we could move $v$ into some other part $V_j$ to decrease the sum of the number of edges in the induced graphs. Similarly, any $v\in V_i$ joined to exactly $2$ other vertices in $V_i$ has exactly $2$ neighbours in each $V_j$. Hence we can move each vertex $v\in V_i$ with degree $2$ in $G[V_i]$ one by one to $V_{d/2}$.

$d$ is odd: Let $V=V_1\cup\dots\cup V_{(d-1)/2}$ be a partition for which $\sum_{i=1}^{(d-1)/2}{e(G[V_i])}$ is minimal. Again, for such a partition each $v\in V_i$ is joined to at most 2 vertices in $V_i$.
If $v\in V_i$ is joined to exactly $2$ other vertices in $V_i$, then it has at most $3$ neighbours in one of the $V_j$'s and exactly $2$ neighbours in all the others.
So we can move one by one each vertex $v\in V_i$ with degree $2$ in $G[V_i]$ to $V_{(d-3)/2}$ or to $V_{(d-1)/2}$.
To obtain the final partition, we move the degree $2$ vertices $v\in G[V_{(d-3)/2}]$ to $V_{(d-1)/2}$, except for those with $3$ neighbours in $V_{(d-1)/2}$.
Finally, note that a vertex of degree $2$ in $G[V_{(d-1)/2}]$ is joined to at least $2$ vertices in each $V_i$ ($i\leq (d-3)/2$), hence is joined to at most $3$ vertices in  $V_{(d-3)/2}$.
\end{proof}

The following proposition states that paths and cycles can be realized on $\Sph^2$ in sufficiently general position. Note that when a $4$-cycle is realized on $\Sph^2$, there is always a pair of non-adjacent points that are diametrically opposite on the sphere.

\begin{proposition}\label{three} Any graph with maximum degree $2$ can be realized on $\Sph^2$ such that the following two properties hold:
\begin{enumerate}
\item For no $3$ distinct vertices $a$, $b$, and $c$, does there exist a vertex at distance $1$ from all three.
\item No $4$ vertices are on a circle, unless the $4$ vertices consist of two pairs of diametrically opposite points coming from two distinct $4$-cycles.
\end{enumerate}
\end{proposition}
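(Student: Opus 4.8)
The plan is to argue that a graph with maximum degree $2$ — a disjoint union of paths and cycles — can be realized on $\Sph^2$ and that the two non-degeneracy conditions hold for a ``generic'' realization, with the $4$-cycle exception forced by the geometry. Let me first think about what a single path or cycle realization looks like: pick a starting point on $\Sph^2$, and then each subsequent vertex is obtained by intersecting a unit circle (the set of points at distance $1$ from the previous vertex) with appropriate constraints; on $\Sph^2$ the locus of points at distance $1$ from a fixed point is a circle, so we have one degree of freedom at each step. For a path $v_1 v_2 \dots v_k$ we may place $v_1$ anywhere, then $v_2$ anywhere on the circle $C_1$ of radius-$1$ points around $v_1$, then $v_3$ anywhere on the circle around $v_2$, and so on — so a path on $k$ vertices has a realization space of dimension $k-1+2 = k+1$ (two for $v_1$, one for each further vertex), a genuinely positive-dimensional family. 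For a cycle the last vertex must lie on the intersection of two unit circles, which is generically two points, so a cycle on $k$ vertices still has a $(k-1)$-dimensional realization space (after using up the closing constraint). Crucially, the $4$-cycle is the one exception: if $v_1 v_2 v_3 v_4$ is a $4$-cycle then $v_1$ and $v_3$ are both at distance $1$ from $v_2$ and from $v_4$, and on $\Sph^2$ the set of points at distance $1$ from both $v_2$ and $v_4$ is the intersection of two circles, which is at most $2$ points — so $v_1$ and $v_3$ are the (unique, generically) two such points, and by symmetry they are reflections of each other through the plane spanned by $v_2, v_4$ and the origin; choosing $v_2, v_4$ antipodal makes $v_1, v_3$ antipodal too. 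So for $4$-cycles I would simply prescribe that each is realized as two pairs of antipodal points (a ``square'' inscribed appropriately), and treat the remaining components by the generic argument.

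The execution I would carry out is the following. First, realize each connected component independently in orthogonal-free position: place the components one at a time, and for each new component choose its realization from its (positive-dimensional, for paths, or still parametrized, for long cycles) family of realizations, using a parameter vector $\mathbf{t}$ ranging over an open subset of some $\R^N$; this gives a map $\Phi \colon U \to (\Sph^2)^V$ whose image consists of valid unit-distance realizations of $G$. For $4$-cycle components there is no freedom beyond a rigid motion, so I would orient each $4$-cycle square by a rotation parameter. Second, I would show that each of the ``bad events'' is a measure-zero (indeed a proper subvariety) condition in parameter space. Property (1) fails if there exist three vertices $a,b,c$ and a fourth vertex $v$ with $|v-a|=|v-b|=|v-c|=1$; for fixed $v$, being at distance $1$ from $a$, from $b$, and from $c$ cuts out three circles on $\Sph^2$ whose common intersection is generically empty (three circles on a sphere have no common point for generic centres) — so for each choice of the four vertex-slots this is a codimension-$\geq 1$ condition on $\mathbf{t}$, and there are finitely many such choices. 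Property (2) fails if $4$ vertices are concyclic and are not the prescribed antipodal quadruple from two $4$-cycles; four points on $\Sph^2$ are concyclic iff they are coplanar (any plane meets the sphere in a circle), which is again a single polynomial equation $\det(\dots)=0$ in their coordinates, hence codimension $\geq 1$ in $\mathbf{t}$ — provided this polynomial is not identically zero on $U$. Third, having shown each bad event is a proper algebraic (or at least real-analytic, nowhere-dense) subset of $U$, I take $\mathbf{t}$ in the complement of the finite union of all of them, which is nonempty (in fact dense) since $U$ is a nonempty open connected set; that $\mathbf{t}$ gives the desired realization.

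The main obstacle, and the step I would spend real care on, is verifying that none of these bad-event polynomials vanishes \emph{identically} on the parameter domain $U$ — i.e., that the codimension really is positive. This is where the $4$-cycle exception must be handled honestly: for a component that is a $4$-cycle, the four vertices are \emph{always} coplanar, so the concyclicity polynomial for those four slots \emph{is} identically zero, which is exactly why the statement carves out that exception; but I must check that no \emph{other} quadruple of slots is forced to be concyclic, and that no quadruple $(a,b,c,v)$ with $v$ at distance $1$ from $a,b,c$ is forced. The natural way to do this is to exhibit, for each relevant quadruple of vertex-slots, a single explicit choice of parameters making the polynomial nonzero — for instance, when the four slots lie in path components one can wiggle them to be in ``general position'' on $\Sph^2$ by hand, and when some slots coincide within a $4$-cycle one uses that the $4$-cycles live in distinct orthogonal-ish planes so two different $4$-cycles contribute genuinely different circles. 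A clean way to package all of this: realize distinct components in pairwise ``generic relative rotation'', argue by a dimension/transversality count that the total realization space has dimension at least, say, $2 + |E|$ minus the cycle-closing constraints, and that each bad locus has strictly smaller dimension; then Sard/genericity finishes it. One should also double-check the small cases (components with $\leq 4$ vertices, and the degenerate situation where $G$ has very few vertices so that ``$3$ distinct vertices'' or ``$4$ vertices'' conditions are vacuous) separately, but these are routine.
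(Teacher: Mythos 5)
There is a genuine gap, and it is concentrated in your treatment of $4$-cycles. You prescribe that each $4$-cycle be realized as two pairs of antipodal points $\pm u,\pm w$ with $u\perp w$. But then all four vertices of that single $4$-cycle lie on the great circle in the plane spanned by $u$ and $w$, so they are four concyclic vertices; the exception in the proposition only permits concyclic quadruples formed by antipodal pairs coming from two \emph{distinct} $4$-cycles, so your construction violates property~2 outright. The two supporting claims you make are also false: choosing $v_2,v_4$ antipodal does not force $v_1,v_3$ to be antipodal (they may be any two points of the great circle orthogonal to $v_2$), and it is not true that ``for a component that is a $4$-cycle, the four vertices are always coplanar.'' On the sphere of radius $1/\sqrt2$, distance $1$ means orthogonality, so in any realization of a $4$-cycle at least one pair of opposite vertices is antipodal; but if exactly one pair is antipodal and the other pair is generic, the four points are \emph{not} coplanar (coplanarity would force the second pair to be antipodal as well). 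This is exactly what the proposition demands and what the paper's construction delivers: remove one vertex from each $4$-cycle, embed the remaining graph $G'$ (which has no $4$-cycle and no $K_{1,3}$) by the Lov\'asz--Saks--Schrijver random sequential embedding, and then reinsert each removed vertex as the antipode of its opposite vertex, so that each $4$-cycle contributes exactly one antipodal pair and no single $4$-cycle is concyclic.

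A secondary, smaller issue is that your genericity argument is only a plan: you correctly identify that the whole burden is to show the ``bad-event'' polynomials do not vanish identically on the parameter space, but you do not carry this out, and for vertices constrained by two neighbours (the closing vertex of a cycle, and triples involving the forced antipodal pairs) this is where the real work lies. The paper's device for this is the ordering-equivalence lemma from the Lov\'asz--Saks--Schrijver correction (Lemma~\ref{ordering}): since all orderings of the random embedding give equivalent measures when $G'$ has no complete bipartite graph on $4$ vertices, each suspected coincidence can be checked by restarting the embedding at the relevant vertices, where it is visibly a measure-zero event. Your transversality/Sard sketch would need a substitute for this re-parametrization step; without it, the claim that each bad locus has positive codimension in your parameter domain $U$ is not established.
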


In the proof we use ideas from the correction \cite{LSS00} to the paper \cite{LSS89} of Lov\'asz, Saks and Schrijver.
Let $G=(V,E)$ be a $(d-1)$-degenerate graph, and label its vertices as $V=\{v_1,\dots,v_n\}$ such that $|\setbuilder{v_j}{j<i\text{ and } v_iv_j\in E}|\leq d-1$ for all $i$.
We realize $G$ in $\Sph^{d-1}$ using a random process.
For any linear subspace $A$ of $\R^d$ of dimension at least $1$, there is a unique probability measure on the subsphere $A\cap \Sph^{d-1}$ that is invariant under orthogonal transformations of $A$, namely the Haar measure $\mu_A$.
Given the Haar measure $\mu$ on $\Sph^{d-1}$, $\mu_A$ on $A\cap\Sph^{d-1}$ can be obtained as the pushforward of $\mu$ by the normalized projection $\tilde{\pi}_A\colon\Sph^{d-1}\setminus A^\perp\to A\cap\Sph^{d-1}$ given by $\tilde{\pi}_A(x)=(\sqrt{2}|\pi_A(x)|)^{-1}\pi_A(x)$, where $\pi_A\colon\R^d\to A$ is the orthogonal projection onto $A$.

We now embed $G$ as follows.
We first choose $u_1$ distributed uniformly from $\Sph^{d-1}$ (that is, according to $\mu$).
Then for each $i=2,\dots,n$, we do the following sequentially.
Let $L_i = \Span\setbuilder{u_j}{j<i\text{ and }v_iv_j\in E}$, and choose $u_i$ uniformly from $L_i^\perp\cap\Sph^{d-1}$ (according to $\mu_{L_i^\perp}$) and independently of $\setbuilder{u_j}{j<i}$.

Since each $L_i$ has dimension at most $d-1$, this process is well defined. If $G$ has maximum degree at most $d-1$, then for any permutation $\sigma$ of $[n]$, the ordering $(v_{\sigma(1)},\dots,v_{\sigma(n)})$ has the property that $|\setbuilder{v_{\sigma(j)}}{j<i\text{ and } v_{\sigma(i)}v_{\sigma(j)}\in E}|\leq d-1$ for all $i$, and we can follow the above random process to embed $G$, thus obtaining a probability distribution $\nu_\sigma$ on the collection of realizations of $G$ in $\Sph^{d-1}$.
As pointed out in \cite{LSS00}, for different $\sigma$ we may obtain different probability distributions $\nu_\sigma$.
Nevertheless, as shown in \cite{LSS00}, under a certain condition on $G$, any two such measures are \emph{equivalent}, that is, they have the same sets of measure $0$, or equivalently, the same sets of measure $1$.
We say that an event $A$ holds \emph{almost surely} (\emph{a.s.}) with respect to some probability distribution if it holds with probability $1$.
\begin{lemma}[\cite{LSS00}]\label{ordering} For any graph $G=(V,E)$ that does not contain a complete bipartite graph on $d+1$ vertices, for any two permutations $\sigma$ and $\tau$ of $\{1,\dots,n\}$, the distributions $\nu_{\sigma}$ and $\nu_{\tau}$ are equivalent.
\end{lemma}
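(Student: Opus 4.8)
The plan is to reduce to the case where $\sigma$ and $\tau$ differ by a transposition of two \emph{consecutive} positions: since these transpositions generate the symmetric group and equivalence of probability measures is a transitive relation, this suffices. (Note first that $K_{1,d}$ is a complete bipartite graph on $d+1$ vertices, so the hypothesis already forces $G$ to have maximum degree at most $d-1$, and hence $\nu_\sigma$ to be defined for every permutation $\sigma$.) So suppose $\tau$ is obtained from $\sigma$ by swapping the two vertices $a$ and $b$ in positions $i$ and $i+1$. The embeddings $u_1,\dots,u_{i-1}$ are generated identically under $\sigma$ and $\tau$, and for positions $i+2,\dots,n$ the embeddings are produced by a Markov kernel depending only on $u_1,\dots,u_{i-1}$, on the embeddings $u_a,u_b$ of $a$ and $b$, and on the adjacency structure of $G$ --- not on the order in which $a$ and $b$ were placed. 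Since equivalence of measures is preserved both by push-forward along a common Markov kernel and by mixing over a common marginal, it suffices to show that, for almost every $u_1,\dots,u_{i-1}$, the conditional law of the ordered pair $(u_a,u_b)$ under $\sigma$ is equivalent to the one under $\tau$.

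If $ab\notin E$, then under either ordering $u_a$ and $u_b$ are chosen independently, each uniformly from a sphere determined by $u_1,\dots,u_{i-1}$ alone, so the two conditional laws are literally equal. Suppose now $ab\in E$, and let $A$ and $B$ be the linear spans of those $u_j$ with $j<i$ whose vertex $v_j$ is, respectively, a neighbour of $a$ or a neighbour of $b$; since $ab\in E$ and $G$ has maximum degree at most $d-1$, we have $\dim A,\dim B\le d-2$. Then the $\sigma$-conditional law produces $u_a\sim\mu_{A^\perp}$ followed by $u_b\sim\mu_{(B+\R u_a)^\perp}$, while the $\tau$-conditional law produces $u_b\sim\mu_{B^\perp}$ followed by $u_a\sim\mu_{(A+\R u_b)^\perp}$; both are probability measures supported on the real algebraic set
\[ M=\setbuilder{(x,y)\in\Sph^{d-1}\times\Sph^{d-1}}{x\perp A,\ y\perp B,\ x\perp y}. \]
The core claim is that, almost surely in $u_1,\dots,u_{i-1}$, away from a subset of lower dimension $M$ is a smooth manifold of the expected dimension $(d-1-\dim A)+(d-2-\dim B)$, and that each of the two descriptions above gives a measure that is absolutely continuous, with everywhere positive density, with respect to the Hausdorff measure on this manifold. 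Granting this, the coarea formula --- applied to the projection $(x,y)\mapsto x$ for the $\sigma$-description and to $(x,y)\mapsto y$ for the $\tau$-description --- shows that both conditional laws are equivalent to the Hausdorff measure on $M$, hence to each other.

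It is only in establishing the non-degeneracy of $M$ that the hypothesis on $G$ is used. The obstruction is a \emph{forced} linear dependence: if, for $\mu_{A^\perp}$-almost every $x$, the subspace $(B+\R x)^\perp$ does not vary with $x$ but is pinned to a fixed subspace, then the $\sigma$- and $\tau$-descriptions are supported on genuinely different pieces of $M$ and cannot be equivalent. Such pinning occurs exactly when $A\subseteq B^\perp$ --- that is, when every already-placed neighbour of $a$ is joined in $G$ to every already-placed neighbour of $b$ --- together with a tightness in a dimension count; but then, writing $N$ and $N'$ for the sets of already-placed neighbours of $a$ and of $b$, the sets $N\cup\{b\}$ and $N'\cup\{a\}$ are the two sides of a complete bipartite subgraph of $G$, which the tightness of the dimension count forces to have at least $d+1$ vertices --- contradicting the hypothesis. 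The $4$-cycle in $\R^3$ is the prototype: there $d+1=4$ and $K_{2,2}$ is itself a $4$-cycle, every realization on $\Sph^2$ forces one of the two diagonal pairs of vertices to be antipodal, so $M$ splits into two families of circles, and placing the two ``diagonal'' vertices in one order versus the other lands on a different family.

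The main obstacle will be making this last step rigorous: one has to show that, almost surely, the only linear dependences among $u_1,\dots,u_{i-1},u_a,u_b$ are the forced ones, and that any forced dependence capable of breaking $M$ really does give rise to a complete bipartite subgraph on $d+1$ vertices. This calls for an inductive bookkeeping of the subspaces into which the successive $u_j$ are constrained, together with a careful dimension count. Everything else --- the reduction to adjacent transpositions, the conditioning argument, the non-adjacent case, and the coarea computation --- is routine.
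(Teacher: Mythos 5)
First, note that the paper does not prove this lemma at all: it is quoted from the correction \cite{LSS00} of Lov\'asz, Saks and Schrijver, so there is no internal proof to compare against; your proposal has to stand on its own. Its scaffolding is the natural (and, as far as the argument of \cite{LSS00} goes, the standard) route: reduce to adjacent transpositions, use that equivalence of measures survives mixing over the common law of $u_1,\dots,u_{i-1}$ and push-forward along the common kernel generating the later points, dispose of the case $ab\notin E$ trivially, and in the case $ab\in E$ compare the two conditional laws of $(u_a,u_b)$ on the constraint set $M$. All of that is fine.

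The genuine gap is exactly the step you yourself flag as ``the main obstacle'': the claim that, almost surely, $M$ is non-degenerate with both conditional laws having positive density with respect to Hausdorff measure on it, and that the only way this can fail is a pinning $A\subseteq B^\perp$ which forces a complete bipartite subgraph on at least $d+1$ vertices. As written, this is asserted rather than proved, and it hides two real difficulties. First, you silently identify the geometric condition ``$A\subseteq B^\perp$'' with the graph condition ``every placed neighbour of $a$ is joined to every placed neighbour of $b$'': adjacency forces orthogonality by construction, but the converse (non-adjacent placed vertices are a.s.\ not orthogonal, except where forced) is precisely a general-position statement about the prefix $u_1,\dots,u_{i-1}$ --- essentially Theorem~\ref{realization} --- which in \cite{LSS00} is deduced \emph{from} the lemma you are proving. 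A correct proof must therefore run a joint induction (on the length of the prefix, proving equivalence of orderings and general position of prefixes simultaneously) or give an independent argument; without this the proposal is circular at its key point. Second, the ``tightness in a dimension count'' that is supposed to turn a pinning into a bipartite subgraph on $\geq d+1$ vertices is never carried out, and it is not a routine bookkeeping exercise: it is where the hypothesis enters quantitatively (the $C_4$-in-$\Sph^2$ example you cite shows how delicately the count sits at the boundary). So the proposal is a credible outline of the approach of \cite{LSS00}, but the step on which the whole lemma turns is missing.
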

This lemma is used in \cite{LSS00} to show that under the same condition, the above random process gives a realization of the graph such that the points are in general position almost surely.
\begin{theorem}[\cite{LSS00, LSS89}]\label{realization}
For any graph $G=(V,E)$ that does not contain a complete bipartite graph on $d+1$ vertices, the above random process gives a realization of $G$ such that for any set of at most $d$ vertices of $G$, the embedded points are linearly independent.
\end{theorem}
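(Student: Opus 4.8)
The plan is to show that, for the distribution $\nu_\sigma$ arising from any embedding order $\sigma$ --- by Lemma~\ref{ordering} it does not matter which, since these distributions are all equivalent --- the embedded points are almost surely in the required position; the realization property (for $d\ge 2$) then comes for free, because edges of $G$ go to pairs of orthogonal vectors on $\Sph^{d-1}$, hence to pairs at distance $1$, while linear independence of every pair of images forces the embedding to be injective. Observe first that the hypothesis excludes $K_{1,d}$ (a complete bipartite graph on $d+1$ vertices), so $G$ has maximum degree at most $d-1$; in particular $G$ is $(d-1)$-degenerate, every ordering of $V$ is admissible for the process, and Lemma~\ref{ordering} applies to all pairs of orderings. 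Since there are only finitely many subsets of $V$ of size at most $d$, it suffices to fix one such $S$, with $|S|=k\le d$, and show that the images of $S$ are almost surely linearly independent.

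I would do this by induction on $|V|$, the case $|V|=1$ being trivial. For the step, fix $S$ and apply Lemma~\ref{ordering} to pass to an order in which $S$ occupies the last $k$ positions, with a chosen vertex $w\in S$ last, say $w=v_n$. Condition on $u_1,\dots,u_{n-1}$: these are distributed as the embedding of $G-w$ produced by the same process, and the induction hypothesis applies to $G-w$, so almost surely every $\le d$ of $u_1,\dots,u_{n-1}$ are linearly independent. Thus the images of $S\setminus\{w\}$ span a subspace $W$ with $\dim W=k-1$, and $L_n=\Span\setbuilder{u_j}{v_jv_w\in E}$ has dimension exactly $\deg(w)$. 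Now $u_n$ is uniform on $L_n^\perp\cap\Sph^{d-1}$, and the images of $S$ are linearly dependent exactly when $u_n\in W$, i.e.\ when $u_n\in W\cap L_n^\perp\cap\Sph^{d-1}$. This set is $\mu_{L_n^\perp}$-null --- giving what we want --- unless $L_n^\perp\subseteq W$, equivalently $W^\perp\subseteq L_n$.

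The crux, and where I expect the real difficulty, is ruling out the degenerate configuration $W^\perp\subseteq L_n$; this is exactly the point at which the hypothesis on $G$ must be used, and is the gap in \cite{LSS89} that \cite{LSS00} repairs, so I would follow their argument. One starts from the easy observation that $\dim L_n=\deg(w)$ and $\dim W^\perp=d-k+1$, so $W^\perp\subseteq L_n$ forces $\deg(w)\ge d-k+1$; moreover, by Lemma~\ref{ordering} the event ``the images of $S$ are linearly dependent'' is $\nu_\sigma$-null for every $\sigma$ as soon as it is $\nu_\sigma$-null for some $\sigma$, and $w$ may be taken to be any vertex of $S$, so if that event had positive probability then \emph{every} vertex of $S$ would have degree at least $d-k+1$. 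Projecting away the images of $N(w)\cap S\subseteq S\setminus\{w\}$ (which lie in $W$ and so are orthogonal to $W^\perp$) upgrades this to: every $w\in S$ has at least $d-k+1$ neighbours outside $S$; and pressing the linear-algebraic relations between the various orderings further --- this is the delicate bookkeeping carried out in \cite{LSS00} --- one concludes that the vertices of $S$ share a common set $T$ of at least $d-k+1$ neighbours outside $S$. Then $G$ contains $K_{k,\,d-k+1}$ on $k+(d-k+1)=d+1$ vertices, contradicting the hypothesis. Hence the degenerate configuration has probability $0$, the images of $S$ are almost surely independent, and a union over the finitely many choices of $S$ finishes the induction.
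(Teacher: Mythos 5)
The first thing to note is that the paper does not prove this statement at all: Theorem~\ref{realization} is imported from Lov\'asz, Saks and Schrijver \cite{LSS89} together with its correction \cite{LSS00}, so the only question is whether your argument stands on its own. Its skeleton is sound and is indeed the natural one: reduce to a single set $S$ of size $k\le d$; use Lemma~\ref{ordering} (all orderings are admissible since the hypothesis excludes $K_{1,d}$, forcing maximum degree at most $d-1$) to place some $w\in S$ last; condition on the first $n-1$ points, which are distributed as the process on $G-w$, so induction on $|V|$ gives their independence in batches of size at most $d$; and observe that the conditional probability that $u_n$ lands in $W=\Span\setbuilder{u_x}{x\in S\setminus\{w\}}$ is zero unless $L_n^\perp\subseteq W$, i.e.\ $W^\perp\subseteq L_n$. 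The necessary conditions you then extract are also correct: $\deg(w)\ge d-k+1$, and, after projecting onto $W^\perp$, at least $d-k+1$ neighbours of each vertex of $S$ lying outside $S$.

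The gap is that the theorem is not proved by these necessary conditions, and the passage from them to a complete bipartite subgraph on $d+1$ vertices is precisely the content of the result. You assert that the vertices of $S$ must share a common set $T$ of at least $d-k+1$ neighbours outside $S$, but the only justification offered is ``pressing the linear-algebraic relations between the various orderings further --- this is the delicate bookkeeping carried out in \cite{LSS00}'', which is an appeal to the reference rather than an argument; since the decisive step is deferred to the very paper being cited, the proposal is a correct reduction plus a citation, not an independent proof. Moreover, what you actually establish cannot yield the contradiction: a $(d-1)$-regular graph of girth greater than $4$ contains no complete bipartite subgraph on $d+1$ vertices, yet for any independent set $S$ with $2\le|S|\le d$ every vertex of $S$ has $d-1\ge d-k+1$ neighbours outside $S$, so the degree conditions alone are consistent with the hypothesis and force nothing. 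To rule out the event $W^\perp\subseteq\Span\setbuilder{u_x}{x\in N(w)}$ with positive probability one must analyse how the images of the neighbours of $w$ outside $S$ could cover $W^\perp$: a vertex $b\in N(w)\setminus S$ is constrained into $W^\perp$ only when it is adjacent to all of $S\setminus\{w\}$, and if some such $b$ misses a vertex of $S\setminus\{w\}$ one has to re-embed it late (again via Lemma~\ref{ordering}) and show the containment then fails almost surely; this further conditioning argument, which is where the complete bipartite configuration actually emerges, is exactly the part you have skipped.
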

We now apply Lemma~\ref{ordering} and Theorem~\ref{realization} to prove Proposition~\ref{three}.

\begin{proof}[Proof of Proposition~\ref{three}]
Observe that $G$ is a disjoint union of paths and cycles. If we remove a vertex from each $4$-cycle, we obtain a graph $G'=(V',E')$ with $V'=\{v_1,\dots,v_n\}\subseteq V$ that does not contain a complete bipartite graph on $4$ vertices (that is, a $4$-cycle or $K_{1,3}$). Take a random realization of $G'$ as described above, and then add back the removed vertices as follows. If $a$ was removed from the cycle $a v_i v_j v_k$ with this cyclic order, then embed $a$ as the point $-v_j$ opposite $v_j$.
We also denote $a$ by $-v_j$.
We claim that this realization satisfies the conditions of the proposition almost surely.

We want to avoid certain configurations on some small number of vertices. By Lemma~\ref{ordering} it is always enough to show that if we start with these few vertices then almost surely they do not form a prohibited configuration.

First we have to see that after adding back the removed vertices, we have a unit distance realization of $G$ almost surely. By Theorem~\ref{realization}, we have a realization of $G'$ almost surely, and for any $c$ with neighbours $b$ and $d$, we have that $b\neq \pm d$ a.s.\ and that no point is diametrically opposite $c$.
By adding back $a=-c$, we then also have $b$ and $d$ at distance $1$ from $a$.

Suppose next that some vertex $v$ is at distance $1$ to $a$, $b$, and~$c$.
If any of these vertices are in $V\setminus V'$, we may replace them by their diametrically opposite point which is in $V$, and we still have that $v$ is at distance $1$ to $a$, $b$, and $c$, and $v,a,b,c\in V'$.
Since $v$ is not adjacent to all three in $G'$, we may assume without loss of generality that $va\notin E'$.
If we then randomly embed $G'$ using an ordering that starts with $v$ and $a$, we obtain a.s.\ that $|v-a|\neq 1$, which is a contradiction (by Lemma~\ref{ordering}).
Therefore, no vertex of $G$ is at distance $1$ to three distinct vertices of~$G$.

We next show that no $4$ distinct vertices $w_1,w_2,w_3,w_4\in V$ of $G$ will be realized on a circle a.s., where $w_i=\epsi_i v_i$ for some $\epsi_i\in\{\pm 1\}$ and $v_i\in V'$, $i=1,2,3,4$, unless we have $w_1=-w_2$ and $w_3=-w_4$ after relabelling.
Suppose first that $v_1,v_2,v_3,v_4$ are distinct, and let $H:=G[v_1,v_2, v_3, v_4]$ and $H'=G[w_1,w_2,w_3,w_4]$.
Note that $v_i\mapsto w_i$ is an isomorphism from $H$ to $H'$.
Since $G$ does not contain a $4$-cycle or $K_{1,3}$, $d_H(v_i)\leq 1$ for some $1=1,2,3,4$.
Without loss of generality, $d_H(v_4)=d_{H'}(w_4)\leq 1$, and if $d_H(v_4)=1$, then $v_3v_4\in E'$.
Then $\dim L_4^\perp\geq 2$, and it follows that after choosing $u_3$, the fourth point $u_4$ and $-u_4$ will a.s.\ not be on the circle through $\epsi u_1$, $\epsi_2 u_2$, $\epsi_3 u_3$, since the great circle of $\Sph^2$ orthogonal to $u_3$ intersects each of the $8$ circles through any of $\pm u_1$, $\pm u_2$, $\pm u_3$ in at most $2$ points.

Next suppose that $v_1,v_2,v_3,v_4$ consist of exactly $3$ distinct vertices, say with $w_3=v_3=v_4$ and $w_4=-v_3=-v_4$.
Since $u_1$, $u_2$, $u_3$ are linearly independent a.s., none of the $8$ triples $\{\varepsilon_1 u_1, \varepsilon_2 u_2, \varepsilon_3 u_3\}$ where $(\varepsilon_1,\varepsilon_2,\varepsilon_3)\in\{\pm 1\}^3$, lie on a great circle a.s., hence $w_4$ is not on the circle through $w_1$, $w_2$, $w_3$ a.s.

The only remaining case is where $v_1,v_2,v_3,v_4$ consist of exactly $2$ distinct vertices, say with $w_1=-w_2=v_1$ and $w_3=-w_4=v_2$.
It follows that $w_1$ and $w_2$ are embedded as opposite points on $\Sph^2$, and $w_3$ and $w_4$ are too.
\end{proof}

\begin{proof}[Proof of Theorem~\ref{max degree d}]

For $d=1$ and $d=2$, the theorem is trivial.
For $d=3$, we use Proposition~\ref{three} as follows. First we remove vertices of degree $3$ in $G$ from $V$ one by one. Let $W\subset V$ be the set of removed vertices. Each $w\in W$ has exactly $3$ neighbours in $V$, $W$ is an independent set of $G$, and the maximum degree in $G[V\setminus W]$ is at most $2$. Now we represent $G[V\setminus W]$ on $\Sph^2$ as in Proposition~\ref{three}. Finally, we embed the removed vertices in $W$ one by one as follows. For any circle on $\Sph^2$, there are exactly $2$ points at distance $1$ from the circle. (They are not necessarily on the sphere.)
For any $w\in W$, we choose one of these two points determined by the circle through the $3$ neighbours of $w$.
It remains to show that there are at most $2$ vertices in $W$ that determine the same circle.
First note that at most $2$ vertices in $W$ can have the same set of neighbours, because $G$ does not have $K_{3,3}$ as a component.
Also, if $w_1\in W$ and $w_2\in W$ have different sets of neighbours, then their neighbours span different circles on $\Sph^2$.
Otherwise, if the neighbours of $w_1$ and $w_2$ lie on the same circle $C$, then by Proposition~\ref{three}, $w_1$ and $w_2$ have a common neighbour $v$ on $C$ that lies on a $4$-cycle in $G[V\setminus W]$, so $v$ will have degree $4$ in $G$, a contradiction.

For $d>3$ we consider two cases depending on the parity of $d$.

\smallskip
\textbf{Case 1:} $d$ is even. Let $V=V_1\cup\dots\cup V_{d/2}$ be a partition as in Lemma~\ref{more buses}. Remove vertices of degree $2$ in $G[V_{d/2}]$ from $V_{d/2}$ until the maximum degree of each remaining vertex in $V_{d/2}$ is at most $1$ in $G[V_{d/2}]$. Let $W\subset V_{d/2}$ be the set of removed vertices.
Then $W$ is an independent set of $G$, any $w\in W$ has exactly $2$ neighbours in $V_{d/2}$, and the maximum degree of a vertex in $G[V_{d/2}\setminus W]$ is at most $1$.
Hence $G[V\setminus W]=G[V_1\cup\dots\cup V_{(d/2)-1}\cup(V_{d/2}\setminus W)]$ can be represented on $\Sph^{d-1}$ as follows. As $G[V_i]$ for $1\leq i <d/2$ and $G[V_{d/2}\setminus W]$ have maximum degree $1$, they can be realized on circles of radius $1/\sqrt{2}$ and centre the origin $o$ in pairwise orthogonal $2$-dimensional subspaces of $\R^d$.
We can also ensure that no two vertices are diametrically opposite on a circle.

Then we add the vertices of $W$ one by one to this embedding. Each vertex $w\in W$ has exactly $2$ neighbours on each circle, so the set $N(w)$ of $d$ neighbours of $w$ span an affine hyperplane $H$ not passing through $o$, hence they lie on a subsphere of $\Sph^{d-1}$ of radius less than $1/\sqrt{2}$.
It follows that there are exactly $2$ points in $\R^d\setminus\Sph^{d-1}$ at distance $1$ from $N(w)$, both on the line through $o$ orthogonal to $H$.
We choose one of these points to embed $w$.

It remains to show that there are at most two $w\in W$ that determine the same subsphere, and that two different subspheres determine disjoint pairs of points at distance $1$.
There are no $3$ vertices in $W$ with the same set of neighbours, since the maximum degree in $V_{d/2}$ is at most $2$.
If some two vertices $w_1$ and $w_2$ from $W$ have different sets of neighbours $N(w_1)\ne N(w_2)$, then they have different pairs of neighbours on at least one of the orthogonal circles, so the affine hyperplanes $H_1$ and $H_2$ spanned by $N(w_1)$ and $N(w_2)$ are different.
If $H_1$ and $H_2$ are parallel, then the two subspheres $H_1\cap\Sph^{d-1}$ and $H_2\cap\Sph^{d-1}$ have different radii, and the pair of points at distance $1$ from $H_1\cap\Sph^{d-1}$ are disjoint from the pair of points at distance $1$ from $H_2\cap\Sph^{d-1}$.
If $H_1$ and $H_2$ are not parallel, the pairs of points at distance $1$ from $H_1\cap\Sph^{d-1}$ and from $H_2\cap\Sph^{d-1}$ lie on different lines through $o$ (and none can equal $o$), and so are also disjoint.
Therefore, all points from $W$ can be placed.

\smallskip
\textbf{Case 2:} $d$ is odd.
Let $V=V_1\cup \dots\cup V_{(d-3)/2}\cup V_{(d-1)/2}$ be a partition as in Lemma~\ref{more buses}.
First we embed $V\setminus V_{(d-3)/2}=V_1\cup \dots\cup V_{(d-5)/2}\cup V_{(d-1)/2}$ on $\mathbb{S}^{d-3}$ as follows.
As each $G[V_i]$ for $1\leq i \leq {(d-5)/2}$ has maximum degree $1$, the $G[V_i]$ can be realized on circles of radius $1/\sqrt{2}$ and with centre in the origin $o$ in pairwise orthogonal $2$-dimensional subspaces of $\R^d$.
We can also ensure that from $V_1\cup \dots\cup V_{(d-5)/2}$ no two vertices are diametrically opposite on a circle. Since the maximum degree of $G[V_{(d-1)/2}]$ is at most $2$, $V_{(d-1)/2}$ can be embedded on a $2$-sphere $S$ of radius $1/\sqrt{2}$ and centre $o$ in a subspace orthogonal to the subspace spanned by $=V_1\cup \dots\cup V_{(d-5)/2}$, as described in Proposition~\ref{three}.
We will denote by $C$ the circle of radius $1/\sqrt{2}$ and with centre $o$ in the plane orthogonal to the subspace spanned by $\mathbb{S}^{d-3}$.

Before treating the general case, we show that we can add $V_{(d-3)/2}$ to the embedding, assuming that $V_{(d-1)/2}$ is embedded in $S$ in general position in the sense that no four points of $V_{(d-1)/2}$ lie on the same circle and no three points of $V_{(d-1)/2}$ lie on a great circle of $S$.
With this assumption, embedding $V_{(d-3)/2}$ is very similar to the embedding of $V_{d/2}$ in the even case. First we find an independent set $W\subseteq V_{(d-3)/2}$ such that the maximum degree of $G[V_{(d-3)/2}\setminus W]$ is at most $1$, and each $w\in W$ has exactly two neighbours in $V_{(d-3)/2}$. Then we embed $V_{(d-3)/2}\setminus W$ on $C$ such that no two vertices are in opposite positions. Note that $V\setminus W$ is embedded in $\mathbb{S}^{d-1}$. Finally, we embed the vertices of $W$ one by one. Each vertex $w\in W$ has exactly two neighbours in $V_{i}$ for $1\leq i \leq {(d-3)/2}$ and three neighbours in $V_{(d-1)/2}$. By the general position assumption the affine hyperplane spanned by the set of neighbours $N(w)$ of $w$ does not contain the origin. Thus there are exactly $2$ points in $\mathbb{R}^d\setminus \mathbb{S}^{d-1}$ at distance $1$ from $N(w)$. We choose one of these points to embed $w$. An argument similar to the one that was used in the even case shows that there are at most two $w\in W$ that determine the same hyperplane, and two different hyperplanes determine disjoint pairs of points.

We now turn to the general case.
As before, we would like to choose an independent set $W\subseteq V_{(d-3)/2}$ such that the maximum degree of $G[V_{(d-3)/2}\setminus W]$ is at most $1$ and each $w\in W$ has exactly two neighbours in $V_{(d-3)/2}$.
However, this is not enough: Note that if $V_{(d-1)/2}$ is not in general position, then it is possible that there is a vertex $w\in V_{(d-3)/2}$ for which $N_1(w):=N(w)\cap V_{(d-1)/2}$ spans a great circle on $S$.  Hence the points that are at distance $1$ from $N(w)$ are the poles of the circle spanned by $N_1(w)$ on $S$. In addition, in this case the points that are at distance $1$ from $N(w)$ are determined by $N_1(w)$. Thus if for $w_1,w_2\in W$ we have $N(w_1)\ne N(w_2)$ but $N_1(w_1)$ and $N_1(w_2)$ span the same great circle on $S$, then the pair of points where $w_1$ and $w_2$ can be embedded, are the same.
Thus, we have to impose some more properties on the independent subset $W$.

Recall that $V_{(d-1)/2}$ is embedded on the $2$-sphere $S$ as in Proposition~\ref{three}.
Therefore, three vertices $a,b,c\in V_{(d-1)/2}$ can only span a great circle if two of them are opposite vertices of a $4$-cycle that are embedded in antipodal points. We assign an ordered triple $(a,b,c)$ to $a,b,c$ if they span a great circle with $a$ and $b$ being antipodal. By the properties of the embedding of $V_{(d-1)/2}$ on $S$, we have that $(a,b,c)$ and $(e,f,g)$ span the same great circle if and only if one of the following two statements hold.

\begin{enumerate}
\item  $\{a,b\}=\{e,f\}$, $c=g$, and no vertex from $V_{(d-1)/2}$ is embedded in the point antipodal to $c=g$. (That is, $c=g$ is not part of a pair of opposite vertices of a $4$-cycle that was embedded in an antipodal pair.)
\item $\{a,b,c,e,f,g\}=\{h,i,j,k\}$ consist of two pairs of points $\{h,i\}$ and $\{j,k\}$ that are opposite vertices of two $4$-cycles.
\end{enumerate}

If for $w_1,w_2\in V_{(d-3)/2}$, $N_1(w_1)$ and $N_1(w_2)$ are as in the first statement, they span the same great circle if and only if $N_1(w_1)=N_1(w_2)=\{a,b,c\}$. Since $a$ and $b$ have degree $2$ in $G[V_{(d-1)/2}]$, by Lemma~\ref{more buses} they are each joined to at most $3$ vertices in $V_{(d-3)/2}$, hence there are at most three vertices $w_1,w_2,w_3\in V_{(d-3)/2}$ for which $N_1(w_1)=N_1(w_2)=N_1(w_3)=\{a,b,c\}$. We will call such a triple $\{w_1,w_2,w_3\}$ a \emph{conflicting triple}.

If for $w_1,w_2 \in V_{(d-3)/2}$, $N_1(w_1)$ and $N_1(w_2)$ are as in the second statement, they span the same great circle in $S$ if and only if $N_1(w_1),N_1(w_2)\subseteq \{h,i,j,k\}$. Again, by Lemma~\ref{more buses}, any vertex from $\{h,i,j,k\}$ has at most three neighbours in $V_{(d-3)/2}$,  and so there are at most four vertices $w_1,w_2,w_3,w_4\in V_{(d-3)/2}$ for which $N_1(w_1),N_1(w_2),N_1(w_3),N_1(w_4)\subseteq \{h,i,j,k\}$. If there are $4$ such vertices we will call them a \emph{conflicting $4$-tuple}, while if there are $3$, we will also call them a \emph{conflicting triple}.

We will also call both a conflicting triple and a conflicting $4$-tuple a \emph{conflicting set}.
Note that any two different conflicting sets are disjoint.
Recall that by the properties of the embedding of $V_{(d-1)/2}$ given by Proposition~\ref{three}, if three vertices on $S$ span a great circle, no vertex from $V_{(d-1)/2}$ is embedded in the poles of this circle.
It follows that it is sufficient for an embedding to find $W\subseteq V_{(d-3)/2}$ with the following properties.

\begin{enumerate}
\item $W$ is an independent set.
\item If $w\in W$, then $w$ has exactly two neighbours in $V_{(d-3)/2}$ {\it(in order for $w$ to have exactly 3 neighbours in $V_{(d-1)/2}$).}
\item $V_{(d-3)/2}\setminus W$ can be embedded on $C$, such that if $a,b\in V_{(d-3)/2}\setminus W$ are neighbours of some $w\in W$, then $a$ and $b$ are not in opposite positions {\it(in order to guarantee that if for $w_1,w_2\in W$ the neighbour sets $N_1(w_1)$ and $N_1(w_2)$ span different circles, then $N(w_1)$ and $N(w_2)$ define different hyperplanes.)}
\item $W$ contains at most two points of any conflicting set \it{(in order to guarantee that the neighbours of at most two vertices from $W$ can define the same hyperplane)}.
\end{enumerate}

Once we find such $W$, we can proceed as in the particular case considered above. In the remaining part of the proof we construct such $W$.

Note that the connected components of $G[V_{(d-3)/2}]$ are paths and cycles.
We embed paths of length at most $3$ and cycles of length $4$ on $C$. Let $\mathcal{H}$ be the set of the remaining connected components of $G[V_{(d-3)/2}]$. It is easy to see the following:

\begin{proposition}\label{partition'}
Let $H\in \mathcal{H}$ be a cycle of length not equal to $4$ or a path of length at least $4$.
Then $V(H)$ can be partitioned into sets $A_H$ and $B_H$, so that:
\begin{enumerate}
\item $H[B_H]$ is a matching containing only vertices of degree $2$ in $H$ (that is, not containing endpoints of $P$).
\item For any maximal independent set $W'\subset B_H$ the graph $H[A_H\cup W']$ has connected components of size $\le 4$.
\end{enumerate}
\end{proposition}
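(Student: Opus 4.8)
The plan is to take $B_H$ to be the vertex set of a suitably spaced \emph{induced} matching of $H$ and $A_H$ its complement in $V(H)$. Write $H$ as $v_1v_2\cdots v_n$ --- a path with $n\ge 5$, or a cycle with $n\ge 3$, $n\ne 4$ (indices mod $n$ in the cyclic case). When $n$ is large enough I look for edges $e_1,\dots,e_k$ ($k\ge 2$), $e_j=v_{p_j}v_{p_j+1}$, with $2\le p_1<\cdots<p_k\le n-2$, such that $p_{j+1}-p_j\in\{3,4\}$ for $1\le j<k$ and, in addition, $p_1\le 4$ and $p_k\ge n-4$ in the path case, or $p_1+n-p_k\in\{3,4\}$ in the cyclic case; when $n$ is small ($n\le 8$ in the path case, $n\le 5$ in the cyclic case) I instead take a single matched edge, and for $n=3$ I take $B_H=\emptyset$. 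Setting $B_H=\bigcup_j\{v_{p_j},v_{p_j+1}\}$, the condition $p_{j+1}-p_j\ge 3$ guarantees that no edge of $H$ other than $e_1,\dots,e_k$ has both endpoints in $B_H$, so $H[B_H]$ is exactly the matching $\{e_1,\dots,e_k\}$, while $p_1\ge 2$ and $p_k\le n-2$ keep the (possible) endpoints $v_1,v_n$ out of $B_H$. This is property~1.

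For property~2, observe that a maximal independent set $W'\subseteq B_H$ is precisely a choice of one endpoint from each $e_j$, so $H[A_H\cup W']=H-D$, where $D$ is the set of the $k$ endpoints \emph{not} chosen. Since $p_{j+1}-p_j\ge 3$, the vertices of $D$ are pairwise non-adjacent, and so $H-D$ is a disjoint union of subpaths of $H$. I would bound each such subpath: the one between the deleted endpoints of two consecutive matched edges $e_j,e_{j+1}$ has at most $p_{j+1}-p_j\le 4$ vertices, the worst case being when $W'$ discards the left endpoint of $e_j$ and the right endpoint of $e_{j+1}$; in the path case the pieces before $e_1$ and after $e_k$ have at most $p_1\le 4$ and at most $n-p_k\le 4$ vertices; in the cyclic case the ``wrap-around'' piece between $e_k$ and $e_1$ has at most $p_1+n-p_k\le 4$ vertices; and in the small cases one checks directly that deleting one endpoint of the single matched edge leaves pieces of at most $4$ vertices (using $p_1\le 4$ and $n-p_1\le 4$ in the path case, and that a cycle on $n\le 5$ vertices minus a vertex is a path on at most $4$ vertices in the cyclic case). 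Hence every component of $H[A_H\cup W']$ has at most $4$ vertices.

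It remains to exhibit the configuration. In the cyclic case with $n\ge 6$, the cyclic differences $p_2-p_1,\dots,p_k-p_{k-1},p_1+n-p_k$ form a list of $k\ge 2$ numbers from $\{3,4\}$ summing to $n$; conversely any way of writing $n$ as such a sum yields a valid configuration, and this is possible for every $n\ge 6$ --- for instance take $\lfloor n/3\rfloor$ threes and raise one or two of them to four according to $n\bmod 3$. The cases $n\in\{3,5\}$ use the small-case recipe, and $n=4$ is excluded. In the path case with $n\ge 5$ I must get from some $p_1\in\{2,3,4\}$ to some $p_k\in\{n-4,n-3,n-2\}$ by steps in $\{3,4\}$, staying within $[2,n-2]$: for $5\le n\le 8$ the two target sets overlap, so a single well-placed edge suffices, while for $n\ge 9$ I take $p_1=4$ and walk to the target set, using that $\{3,4\}$ together generate, as non-negative integer combinations, every integer $\ge 6$ (and also $3$ and $4$ themselves).

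I expect the only real obstacle to be the bookkeeping: confirming that in the path case the first and last pieces never exceed $4$ vertices regardless of which endpoint $W'$ discards, handling the wrap-around piece in the cyclic case, and checking the small cases ($n\le 5$ cyclic, $n\le 8$ path). None of this is conceptually difficult once the induced-matching picture and the estimate ``a piece between consecutive matched edges has at most $p_{j+1}-p_j$ vertices'' are in place --- which is presumably why the statement is flagged as easy to see.
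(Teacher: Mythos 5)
Your proposal is correct and takes essentially the same route as the paper, which merely says to choose the edges of $B_H$ ``greedily'' (starting next to an endpoint in the path case) --- your spaced induced matching with consecutive gaps in $\{3,4\}$ is exactly that greedy choice written out, together with the straightforward piece-size bookkeeping. One small normalization slip: the constraint $2\le p_1<\dots<p_k\le n-2$ should only be imposed in the path case, since for a cycle (e.g.\ $n=6$) it is infeasible as stated, but also unnecessary there because indices are taken mod $n$ and your description via writing $n$ as a sum of at least two terms from $\{3,4\}$ already gives a valid configuration.
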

\begin{proof}
Such a partition is very easy to achieve --- simply choose the edges in $B_H$ ``greedily'', in the path case starting from a vertex next to the endpoint of a path.
\end{proof}
For each $H\in\mathcal H$ we denote the partition given by Lemma~\ref{partition'} as $V(H) = A_H\cup B_H$, and select a maximal independent $W_H$ from each $B_H$, $H\in\mathcal H$, in a specific way to be explained below, and put $W:=\bigcup_{H\in\mathcal H} W_H$. First, let us verify that for any choice of $W$, we can make sure that the properties 1--3 are satisfied. First, clearly, $W$ is an independent set. Second, by the choice of $B$ in each component $H\in \mathcal H$, each vertex in $W$ has degree $2$.
Third, since each connected component of $H\setminus W$, $H\in \mathcal H'$, has size at most $4$, it can be realized on the circle $C$ such that vertices from different connected components are not in opposite position. Thus, if $w\in W$ has neighbours in different connected components of $H\setminus W$, then the property 3 is satisfied for $w$. If both neighbours of $w$ are in the same component of $H
\setminus W$, then $H$ is a cycle of length $3$ or $5$, and $H\setminus W = H\setminus \{w\}$ is a path of length $1$ or $3$. In both cases the neighbours of $w$ form an angle of $\pi/2$ and thus are not in opposite positions.

To conclude the proof, it remains to choose $W$ in such a way that property 4 is also satisfied.
Recall that $G[M]$ is a matching, where $M:= \bigcup_{H\in\mathcal H} B_H$, and $W\subset M$ has exactly $1$ vertex from each edge of $G[M]$. The vertices from $M$ may belong to several conflicting sets, but, since different conflicting sets are disjoint, each vertex belongs to at most one of them.

We add some new edges to $G[M]$ to obtain $G'$ as follows. For each conflicting triple, we add an edge between two of its vertices that were not connected before, and for each conflicting $4$-tuple we add two vertex disjoint edges that connect two-two of its vertices that were not connected before. It is clear that finding such edges is possible. Moreover, the added set of edges forms a matching. Thus, the graph $G'$ is a union
of two matchings, and therefore does not have odd cycles. Hence, $G'$ is bipartite, and it has an independent set $W$ which contains exactly one vertex from each edge in $G'$. This is the desired independent set, since no independent set in $G'$ intersects a conflicting group in more than two vertices.
\end{proof}

\section{Number of edges}

A graph $G=(V,E)$ is called \emph{$k$-degenerate} if any subgraph of $G$ has a vertex of degree at most $k$.

\begin{lemma}\label{degree d-2}
Let $d\geq 2$ and let $x$ be a vertex of degree at most $d-2$ in a graph $G$. If $G-x$ can be realized on $\Sph^{d-1}$ as a unit distance graph, then $G$ can also be represented on $\Sph^{d-1}$.
\end{lemma}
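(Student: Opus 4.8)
The plan is to place $x$ at an appropriate point of $\Sph^{d-1}$ once the rest of $G$ is embedded. Suppose $G - x$ is realized on $\Sph^{d-1}$, and let $N(x) = \{y_1,\dots,y_k\}$ with $k \le d-2$ be the images of the neighbours of $x$. I want a point $z \in \Sph^{d-1}$ with $|z - y_i| = 1$ for all $i$. On the sphere of radius $1/\sqrt 2$ centred at the origin, $|z - y_i|^2 = 1$ is equivalent to the \emph{linear} condition $\langle z, y_i\rangle = 0$. So the set of valid positions for $x$ is exactly $\Sph^{d-1} \cap L^\perp$, where $L = \Span\{y_1,\dots,y_k\}$.

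First I would note that $\dim L \le k \le d-2$, so $\dim L^\perp \ge 2$, and hence $\Sph^{d-1} \cap L^\perp$ is a sphere of dimension at least $1$ — in particular it is infinite. The only thing to check is that we can choose $z$ in this set distinct from all the already-placed vertices of $G-x$ (so that the embedding remains injective). Since $\Sph^{d-1}\cap L^\perp$ is infinite and $V(G)$ is finite, such a $z$ exists; pick any one and set the image of $x$ to be $z$. By construction $|z - y_i| = 1$ for each neighbour $y_i$ of $x$, and no other adjacency is affected, so we obtain a unit distance realization of $G$ on $\Sph^{d-1}$.

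There is essentially no obstacle here; the only mild subtlety is making sure the new point is genuinely new, which is immediate from the infinitude of $\Sph^{d-1}\cap L^\perp$ (a consequence of the codimension bound $d-2$ rather than $d-1$). Note the analogous statement fails in $\R^d$ with $d-1$ in place of $d-2$ would need $k \le d-1$ there: in Euclidean space the locus of points at distance $1$ from $k$ given points is a sphere of dimension $d-1-\dim(\text{affine hull})$, which can already be infinite for $k \le d-1$; on $\Sph^{d-1}$ the linearization forces the codimension-$2$ requirement, explaining the hypothesis $\deg x \le d-2$.
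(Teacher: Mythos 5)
Your proof is correct and is exactly the paper's argument: the unit-distance condition on the sphere of radius $1/\sqrt{2}$ linearizes to orthogonality, so the admissible locus for $x$ is $\Sph^{d-1}\cap L^\perp$ with $\dim L^\perp\geq 2$, i.e.\ it contains a great circle, from which a point distinct from the finitely many placed vertices can be chosen. You simply spell out the injectivity detail that the paper leaves implicit.
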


\begin{proof}
The neighbours of $x$ span a linear subspace of dimension at most $d-2$, so there is a great circle from which to choose $x$.
\end{proof}

\begin{corollary}\label{d-2-degenerate} Any $(d-2)$-degenerate graph has spherical dimension at most $d$.
\end{corollary}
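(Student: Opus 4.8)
The plan is to prove this by a straightforward induction on the number of vertices $n=|V(G)|$, with Lemma~\ref{degree d-2} doing all the work in the inductive step. Throughout we take $d\geq 2$, which is the setting in which Lemma~\ref{degree d-2} is stated (for $d=2$ a $0$-degenerate graph is edgeless and the claim is trivial anyway). The one structural fact I would record first is that every induced subgraph of a $(d-2)$-degenerate graph is again $(d-2)$-degenerate: any subgraph of an induced subgraph $G[S]$ is a subgraph of $G$, hence contains a vertex of degree at most $d-2$. This is immediate from the definition, which quantifies over \emph{all} subgraphs, but it is exactly what lets the induction go through.

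For the base case, take $G$ to be a single vertex (or the empty graph); since $d\geq 2$ the sphere $\Sph^{d-1}$ is non-empty, so such a $G$ is trivially realizable on $\Sph^{d-1}$. For the inductive step, let $G$ have $n\geq 2$ vertices. Because $G$ is a subgraph of itself, $G$ has a vertex $x$ of degree at most $d-2$. The graph $G-x$ is an induced subgraph of $G$ on $n-1$ vertices, hence is $(d-2)$-degenerate by the observation above, so by the induction hypothesis $G-x$ can be realized as a unit distance graph on $\Sph^{d-1}$. Now apply Lemma~\ref{degree d-2} with this vertex $x$ of degree at most $d-2$: its neighbours span a subspace of dimension at most $d-2$, so there is a great circle of $\Sph^{d-1}$ from which to pick a position for $x$ at distance $1$ from all of them, yielding a realization of $G$ on $\Sph^{d-1}$. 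Therefore $\dim_S G\leq d$.

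I do not expect any real obstacle here; the proof is essentially a repackaging of Lemma~\ref{degree d-2} via a degeneracy ordering of $V(G)$. The only points that require (trivial) care are checking that vertex deletion preserves $(d-2)$-degeneracy and choosing a small enough base case so that $\Sph^{d-1}\neq\emptyset$, both of which are handled above.
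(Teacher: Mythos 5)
Your induction on the number of vertices, peeling off a vertex of degree at most $d-2$ and reinserting it via Lemma~\ref{degree d-2}, is correct and is exactly the argument the paper intends (the corollary is stated there as an immediate consequence of that lemma, with the routine degeneracy-ordering induction left implicit). No issues.
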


The above corollary also follows from the proof of Proposition 2 in \cite{ES80}.

In the proof of Theorem~\ref{d+2 choose 2} we need the following well-known lemma.

\begin{lemma}\label{cross-polytope}  If the complement of a graph $H$ on $d+k$ vertices has a matching of size $k$, then $H$  can be realized on $\Sph^{d-1}$. In particular, the graph of the $d$-dimensional cross-polytope can be realized on $\Sph^{d-1}$.
\end{lemma}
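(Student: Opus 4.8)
The plan is to exploit the elementary fact that on the sphere $\Sph^{d-1}$ of radius $1/\sqrt2$ centred at the origin, two points $x,y$ satisfy $|x-y|=1$ if and only if $\langle x,y\rangle=0$, since $|x-y|^2=|x|^2+|y|^2-2\langle x,y\rangle=1-2\langle x,y\rangle$. Thus realizing a graph on $\Sph^{d-1}$ amounts to giving an injective assignment of its vertices to points of $\Sph^{d-1}$ under which every edge becomes a pair of mutually orthogonal vectors; crucially, since a unit distance graph need not contain \emph{all} unit-distance pairs, we only have to control edges, not non-edges.

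First I would record that the hypothesis forces $k\le d$: a matching of size $k$ in the complement of $H$ covers $2k$ of the $d+k$ vertices, so $2k\le d+k$. Write the $k$ matching edges of $\compl{H}$ as disjoint pairs $\{a_i,b_i\}$ for $i=1,\dots,k$ (these are non-edges of $H$), and let $c_1,\dots,c_{d-k}$ be the remaining $d-k\ge 0$ vertices. Fix an orthonormal basis $e_1,\dots,e_d$ of $\R^d$.

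Now define $\phi\colon V(H)\to\Sph^{d-1}$ by $\phi(a_i)=\tfrac1{\sqrt2}e_i$ and $\phi(b_i)=-\tfrac1{\sqrt2}e_i$ for $i=1,\dots,k$, and $\phi(c_j)=\tfrac1{\sqrt2}e_{k+j}$ for $j=1,\dots,d-k$. This map is injective, and by the opening observation two images $\phi(x),\phi(y)$ fail to be at distance $1$ exactly when they form one of the antipodal pairs $\{\tfrac1{\sqrt2}e_i,-\tfrac1{\sqrt2}e_i\}$, i.e.\ exactly when $\{x,y\}=\{a_i,b_i\}$ for some $i$, which is a non-edge of $H$. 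Hence every edge of $H$ is carried to an orthogonal, and so unit-distance, pair, and $\phi$ realizes $H$ on $\Sph^{d-1}$.

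For the final assertion, the graph of the $d$-dimensional cross-polytope has $2d$ vertices and its complement is a perfect matching (the $d$ pairs of opposite vertices), of size $k=d$; applying the above with no vertices $c_j$ sends the $2d$ vertices to $\pm\tfrac1{\sqrt2}e_1,\dots,\pm\tfrac1{\sqrt2}e_d$, which are precisely the vertices of the cross-polytope. There is no real obstacle here: the only points needing care are the counting inequality $2k\le d+k$ ensuring enough coordinate directions are available, and the remark that controlling edges suffices.
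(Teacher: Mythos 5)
Your proof is correct and essentially identical to the paper's: both place the $k$ matched non-adjacent pairs at antipodal points $\pm\tfrac{1}{\sqrt2}e_i$ and the remaining $d-k$ vertices at further orthogonal basis directions, using that orthogonality on the radius-$1/\sqrt2$ sphere means unit distance. Your added remarks (the count $2k\le d+k$ and the cross-polytope identification) are fine but not a different argument.
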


\begin{proof}  Let $v_1,\dots, v_{d+k}$ be the vertices of $H$, labelled so that $v_{i}$ is not joined to $v_{d+i}$ ($i=1,\dots,d$).
Let vectors $e_1,e_2,\dots,e_d\in\Sph^{d-1}$ form an orthogonal basis.
Map $v_{i}$ to $e_{i}$ and $v_{d+i}$ to $-e_i$ ($i=1,\dots,d$).
This is the desired realization: $e_i$ is at distance $1$ from $\pm e_j$ whenever $j\ne i$.
\end{proof}

\begin{proof}[Proof of Theorem~\ref{d+2 choose 2}]
Define $g(2)=3$, $g(3)=8$ and $g(d)=\binom{d+2}{2}-1$ for $d\geq 4$.
We show by induction on $d\geq 2$ that if $G=(V,E)$ has at most $g(d)$ edges, then $G$ can be embedded in $\R^d$, and if $G$ furthermore does not contain $K_{d+1}$ or $K_{d+2}-K_3$, then $G$ can be embedded in the sphere $\Sph^{d-1}$ of radius $1/\sqrt{2}$.
This is easy to verify for $d=2$.
From now on, assume that $d\geq 3$, and that the statement is true for dimension $d-1$.

Remove vertices of degree at most $d-2$ one by one from $G$ until this is not possible anymore.
If nothing remains, Corollary~\ref{d-2-degenerate} gives that $G$ can be embedded in $\Sph^{d-1}$.
Thus, without loss of generality, a subgraph $H$ of minimum degree at least $d-1$ remains.
We first show that if $H$ contains $K_{d+1}$ or $K_{d+2}-K_3$, then $G$ can be embedded in $\R^d$, though not in $\Sph^{d-1}$.

Suppose that $H$ contains $K_{d+2}-K_3$.
Then $H$ cannot have more than $d+2$ vertices, since each vertex of $H$ has degree at least $d-1$.
Therefore, $H$ is contained in $K_{d+2}-e$, which can be embedded in $\R^d$ as two regular $d$-simplices with a common facet.
Note that this embedding has diameter $<2$.
There are at most two edges of $G$ that are not in $H$.
Then the degrees of the vertices in $V(G)\setminus V(H)$ are at most $2$, so they can easily be embedded in $\R^d$.

Suppose next that $H$ contains $K_{d+1}$ but not $K_{d+2}-K_3$.
If $H$ has more than one vertex outside $K_{d+1}$, then $|E(H)|\geq \binom{d+1}{2} + d-1 + d-2 > g(d)$, a contradiction.
If $H$ has a vertex outside $K_{d+1}$, then this vertex is joined to at least $d-1$ vertices of $K_{d+1}$, and it follows that $H$ contains $K_{d+2}-K_3$, a contradiction.
Therefore, $H=K_{d+1}$.
There are at most $g(d)-\binom{d+1}{2}\leq d$ edges between $V(H)$ and $V(G)\setminus V(H)$.
Therefore, some $v\in H$ is not joined to any vertex outside $H$.
%Choose a vertex $v$ of $H$.
Then $H-v=K_d$ can be embedded in $\Sph^{d-1}$, hence by Lemma~\ref{degree d-2}, $G-v$ can be embedded in $\Sph^{d-1}$.
Since $v$ is only joined to the $d$ vertices in $V(H-v)$, we can embed it in $\R^d\setminus\Sph^{d-1}$ so that it has distance $1$ to all its neighbours.

We may now assume that $H$ does not contain $K_{d+1}$ or $K_{d+2}-K_3$.
It will be sufficient to show in this case that $H$ can be embedded in $\Sph^{d-1}$, as it then follows by Lemma~\ref{degree d-2} that $G$ can also be embedded in $\Sph^{d-1}$.

If $H$ has at most $d+1$ vertices, then $H$ is a proper subgraph of $K_{d+1}$, hence $H$ is a subgraph of the cross-polytope, and we are done.

Suppose next that $H$ has $d+2$ vertices.
Then the complement $\overline{H}$ has maximum degree at most $2$.
If $\overline{H}$ has two independent edges, then $H$ is a subgraph of the $d$-dimensional cross-polytope.
Otherwise, the edges of $\overline{H}$ are contained in a $K_3$.
Then $H$ contains $K_{d+2}-K_3$, a contradiction.

Thus without loss of generality, $H$ has at least $d+3$ vertices.

Let $v$ be a vertex of maximum degree in $H$.
If $v$ is adjacent to all other vertices of $H$, then $v$ has degree at least $d+2$, hence $|E(H-v)|\leq g(d)-(d+2)\leq g(d-1)$, and, since $H$ does not contain  $K_{d+1}$ or $K_{d+2}-K_3$, the graph  $H-v$ does not contain $K_{d}$ or $K_{d+1}-K_3$. Therefore, by induction, $H-v$ is embeddable in $\Sph^{d-1}\cap H$, where $H$ is a hyperplane through the origin.
We then embed $v$ as a point on $\Sph^{d-1}$ orthogonal to $H$.

Thus without loss of generality, each vertex $v$ of maximum degree $\Delta$ has a non-neighbour $w$.
We may also assume that $\Delta\geq d$, otherwise Proposition~\ref{max degree d-1} gives that $H$ is embeddable in $\Sph^{d-1}$.
Then $|E(H-v-w)|\leq g(d)-\Delta-(d-1)\leq g(d)-d-(d-1)\leq g(d-1)$.
By induction, either $H-v-w$ is embeddable in $\Sph^{d-1}\cap H$, where $H$ is a hyperplane passing through the origin, and then $v$ and $w$ can be embedded as the two points on $\Sph^{d-1}$ orthogonal to $H$, or $H-v-w$ contains a $K_d$ or a $K_{d+1}-K_3$.

\smallskip
\textbf{Case 1:}  \emph{For any $v$ of maximum degree and any $w$ that is non-adjacent to $v$, $H-v-w$ contains a $d$-clique $K$.}
Since $H$ does not contain $K_{d+1}$, $v$ has a non-neighbour $x$ in $K$.
Then $H-v-x$ contains another $d$-clique $K'$.
If $K$ and $K'$ intersect in at most $d-2$ vertices, then $K\cup K'$ has at least $\binom{d+2}{2}-4$ edges, hence $|E(H)|\geq d+\binom{d+2}{2}-4 > g(d)$, a contradiction.
Therefore, $K$ and $K'$ intersect in exactly $d-1$ vertices, and $K\cup K'$ has at least $\binom{d+1}{2}-1$ edges.
Since $H$ has at least $d+3$ vertices, there exists a vertex $y\neq v$ not in $K\cup K'$.
Then $|E(H)|\geq \deg(v)+\deg(y)-1+\binom{d+1}{2}-1\geq d+(d-1)-1+\binom{d+1}{2}-1>g(d)$, a contradiction.

\smallskip
\textbf{Case 2:} \emph{Some vertex $v\in H$ of maximum degree $\Delta\geq d$ has a non-neighbour $w$ such that $H-v-w$ contains a $K_{d+1}-K_3$.}
Then $|E(H)|\geq\Delta+\deg(w)+\binom{d+1}{2}-3\geq g(d)$.
Since also $|E(H)\leq g(d)$, it follows that $H-v-w = K_{d+1}-K_3$, $v$ has degree $\Delta=d$, and $w$ has degree $d-1$.
Let $v_1,v_2,v_3$ be the pairwise non-adjacent vertices in $H-v-w$.
If $v$ is joined to at most $2$ of the $v_i$ and $w$ is joined to at most $1$ of the $v_i$, $i=1,2,3$, then the components of $H[v,w, v_1,v_2,v_3]$ are paths of length at most $3$, hence can be realized  on a great circle $C$ of $\Sph^{d-1}$, and the remaining $K_{d-2}$ can be realized on the subsphere orthogonal to $C$.
Otherwise, either $v$ is joined to all of $v_1, v_2, v_3$, or $w$ is joined to at least two of them.
Note that $v$ has a non-neighbour other than $w$ in $H$, and $w$ has at least $2$ non-neighbours other than $v$ in $H$.
It follows that there are two different vertices $w_1,w_2\in V(H)\setminus\{v,w\}$ such that $vw_1$ and $w w_2$ are non-adjacent pairs and $|\{w_1,w_2\}\cap \{v_1,v_2,v_3\}|\le 1$.
Thus, we can find three disjoint pairs of non-adjacent vertices in $H$ and apply Lemma~\ref{cross-polytope}.
\end{proof}

\section{Ramsey results}

To see that $f_{SD}(s)\ge \lceil (s+1)/2 \rceil$ consider the graph $G$ on $2d$ vertices which is a union of a $d+1$ clique and $d-1$ isolated vertices. Then $G$ contains $K_{d+1}$, so it cannot be embedded on $\mathbb{S}^{d-1}$  and $\overline{G}$ contains a copy of $K_{d+2}-K_3$.

To prove $f_{SD}(s)\le \lceil (s+1)/2 \rceil$ we show that if the edges of the complete graph on $2d-1$ vertices are coloured with red and blue, then either the graph spanned by the red (denoted by $G_r$) or the graph spanned by the blue edges (denoted by $G_b$) can be embedded on $\mathbb{S}^{d-1}$.

The proof is by induction on $d$. It is easy for $d=1,2$. For $d>2$: If the maximum degree of $G_r$ or $G_b$ is at most $d-1$, we are done by Proposition~\ref{max degree d-1}. So we may assume that there are two vertices, $v_r$ and $v_b$, of degree at least $d$ in $G_r$ and $G_b$ respectively. By the induction we may assume that $G_r[V-v_r-v_b]$ is realizable on $\mathbb{S}^{d-2}$. If the edge $v_rv_b$ is blue, we put $v_r$ and $v_b$ in the poles of the $(d-2)$-sphere on which $G_r[V-v_r-v_b]$ is embedded. Otherwise $v_b$ has at most $d-3$ neighbours in $G_r[V-v_r]$. In this case we first add $v_b$ on the $(d-2)$-sphere (on which $G_r[V-v_r-v_b]$ is embedded), and then we can put $v_r$ in one of the poles of the $(d-2)$-sphere.\\

To obtain the lower bound on $f_D(s)$ consider the graph $G$ which is the union of $K_{d+2}$ and $d$ isolated vertices. For this $G$ neither $G$ or $\overline{G}$ can be embedded in $\mathbb{R}^d$.

For the upper bound on $f_D(s)$ we show that if the edges of the graph on $2d$ vertices are coloured with red and blue, then either $G_r$ or $G_b$ can be embedded in $\mathbb{R}^d$. For any vertex $v\in V$ we have $d_{G_r}(v)+d_{G_b}(v)=2d-1$, so either $d_{G_r}\le d-1$ or $d_{G_b}\le d-1$. Hence we may assume that there are at most $d$ vertices that have degree larger than $d-1$ in $G_r$. Let $W$ be the set of vertices $v\in V$ with $d_{G_r}(v)\le d-1$. $|V\setminus W|\le d$, so we can embed $G_r(V)$ on $\mathbb{S}^{d-1}$. Then we add the vertices of $W$ to this embedding one by one as follows. If $w\in W$ has a neighbour in $W$, then it has at most $d-2$ neighbours in $V\setminus W$, thus we remove it from $W$ and embed it on $\mathbb{S}^{d-1}$. We repeat this until $W$ is an independent set. Now for each vertex $w\in W$ there is at least a circle (which is not necessarily contained in $\mathbb{S}^{d-1}$) in which we can embed $w$, so we embed them one by one.

\section{Additional questions}

In Theorem~\ref{max degree d} we proved that any graph with maximum degree $d$, except if $d=3$ and $G$ has $K_{3,3}$ as a component, can be embedded in $\mathbb{R}^d$. We suspect that a slightly stronger statement can be also true.

\begin{problem} Is it true that for $d>3$ any graph with maximum degree $d$, except $K_{d+1}$, has spherical dimension at most $d$?
\end{problem}

For $d=3$ the cube (and the cube minus a vertex) cannot be embedded on $\mathbb{S}^2$ neither the graphs on the vertices $a_1,\dots,a_n,b_1,\dots,b_n$ with the edge set $\setbuilder{a_ib_j}{j=i-1,i,i+1 \textrm{ mod } n}$ where $n\ge 3$ odd.

The lower and upper bound on $f_D(s)$ in Theorem~\ref{Ramsey} are very close, but still it does not give the exact value of $f_D(s)$. We conjecture that the lower bound is sharp.

\begin{problem} Is it true that either $G$ or $\overline{G}$ on $2d+1$ vertices has dimension at most $d$?
\end{problem}

\bibliographystyle{amsalpha}
\bibliography{biblio}

\end{document}